 \documentclass[11pt,final]{siamltex}
  \setlength{\textwidth}{38pc}
\usepackage{amsmath}
\usepackage{graphicx}
\usepackage{subfigure}
 \usepackage{threeparttable}
 \usepackage{algorithm,version}  
\usepackage{algorithmic}

\usepackage{bm}
\usepackage{amssymb}
\usepackage{cases}
\usepackage{color}
\newtheorem{remark}{Remark}[section]

    \title{Efficient Linear and Unconditionally Energy Stable  Schemes for the Modified Phase Field Crystal Equation\thanks{The work of X. Li is supported by the National Natural Science Foundation of China under grant number 11901489, 11971407 and Postdoctoral Science Foundation of China under grant numbers BX20190187 and 2019M650152. The work of J. Shen is supported in part by NSF grant  DMS-1720442 and AFOSR  grant FA9550-16-1-0102.}
}
    \author{ Xiaoli Li
        \thanks{School of Mathematical Sciences and Fujian Provincial Key Laboratory on Mathematical Modeling and High Performance Scientific Computing, Xiamen University, Xiamen, Fujian, 361005, China. Email: xiaolisdu@163.com}.
        \and Jie Shen 
         \thanks{
         Department of Mathematics, Purdue University, West Lafayette, IN 47907, USA. Email: shen7@purdue.edu}.
}

\begin{document}

\maketitle

\begin{abstract}
In this paper, we construct efficient schemes based on the scalar auxiliary variable (SAV) block-centered finite difference method for the modified phase field crystal (MPFC) equation, which is a sixth-order nonlinear damped wave equation. The schemes are linear, conserve mass  and unconditionally dissipate a pseudo energy. We prove  rigorously second-order error estimates in both time and space  for the phase field variable in discrete norms. We also present some numerical experiments  to verify our theoretical results and demonstrate the robustness and accuracy.
\end{abstract}

 \begin{keywords}
 Modified phase field crystal, scalar auxiliary variable (SAV),  energy stability, error estimate, numerical experiments
 \end{keywords}
 
 \begin{AMS}
35G25, 65M06, 65M12, 65M15
    \end{AMS}
\pagestyle{myheadings}
\thispagestyle{plain}
\markboth{XIAOLI LI AND JIE SHEN} {Energy Stability and Convergence for MPFC Model}
 \section{Introduction}
 The phase field crystal (PFC) model was developed in \cite{elder2004modeling,elder2002modeling} to model the crystallization process in the purification of solid compounds. It has been used  to model the evolution of the atomic-scale crystal growth on diffusive time scales. In the PFC model, the phase field variable is introduced to describe the phase transition from the liquid phase to the crystal phase. The model is  versatile and able  to simulate various phenomena, such as grain growth, epitaxial growth, reconstructive phase transitions, material hardness, and crack propagations.
 Numerical methods and simulations for the PFC model have been studied extensively, including  finite element method \cite{gomez2012unconditionally}, finite difference methods \cite{li2017efficient,wise2009energy,zhang2013adaptive}, local discontinuous Galerkin method \cite{guo2016local} and Fourier-spectral method \cite{yang2017linearly}. 
 
 The modified phase field crystal (MPFC) equation was introduced in \cite{stefanovic2006phase} to model phase-field crystals with elastic interactions.  The MPFC equation can be viewed as a perturbed gradient flow with respect to a free energy, and is a sixth order  nonlinear damped wave equation.  However, as pointed out in \cite{wang2010global}, the original free energy of the MPFC equation may increase in time on some time intervals. Thus  A pseudo energy is introduced in \cite{wang2010global} and shown to  be dissipative. 
There exist a number of work  on the numerical approximations of the MPFC model. 
 First and second order accurate nonlinear  convex splitting schemes have been proposed in \cite{wang2010global,baskaran2013convergence}, and are proved to be  unconditional energy stable and  convergent.  A nonlinear multigrid method is used to solve the nonlinear system at each time step \cite{baskaran2013energy}. Guo and Xu \cite{guo2018high}  developed a first-order and a second-order nonlinear convex splitting, and a first-order linear energy stable fully discrete with local discontinuous Galerkin (LDG) methods. Very recently, Li and his coauthors 
\cite{li2019efficient} proposed  unconditional energy stable schemes based on the "Invariant Energy Quadratization" (IEQ) approach for the MPFC model but without  convergence proof. The  convergence analysis is  challenging due to the nonlinear hyperbolic nature of the MPFC equation. To our knowledge, there is  no second-order convergence analysis  on any linear scheme for the MPFC equation.

The main goals of this paper are to construct linear and unconditionally energy stable schemes based on the recently proposed scalar auxiliary variable (SAV) approach \cite{shen2018scalar,shen2019new}, and to carry out a rigorous error analysis. More specifically, we construct two SAV block-centered finite difference schemes for the MPFC equation based on the Euler backward and Crank-Nicolson schemes respectively, and show that they are unconditionally energy stable with a suitably defined pseudo energy, and we establish second-order convergence in both time and space in a discrete $L^{\infty}(0,T;H^3(\Omega))$ norm.

The rest of the paper is organized as follows. In Section 2 we describe  the MPFC model and reformulate it  using the SAV approach.  In Section 3 we construct  fully discrete schemes for the reformulated MPFC equation by  block-centered finite difference method, and show that the scheme  conserves mass and is unconditionally energy stable. In Section 4 we derive the error estimate for the MPFC model. In Section 5 some numerical experiments are presented to verify the accuracy of the proposed numerical schemes. 
 
\section{The MPFC model and its semi-discretization in time} \label{sec:Preliminary}
We describe in this section the MPFC model, its reformulation using the SAV approach,  construct a second-order SAV semi-discretization scheme and show that it preserves mass and  dissipates a pseudo energy.

\subsection{The MPFC model and its SAV reformulation}
Consider the free energy (cf. \cite{baskaran2013energy,baskaran2013convergence,guo2018high})
\begin{equation}\label{e_definition of free energy}
\aligned
E(\phi)=\int_{\Omega}\{\frac{1}{2}(\Delta \phi)^2-|\nabla \phi|^2+\frac \alpha 2 \phi^2+F(\phi)\}d\textbf{x},
\endaligned
\end{equation}
where $\Omega\subset \mathbb{R}^d \,(d=1,2,3)$. The phase field variable $\phi$ is introduced to represent the concentration field of a coarse-grained temporal average of the density of atoms. $F(\phi)=\frac 1 4 \phi^4$. Here  $\alpha=1-\epsilon$ with $\epsilon\ll1$.  Then the MPFC model, which is designed to describe the elastic interactions:
\begin{equation}    \label{e_continuous_model}
\left\{
\begin{array}{l}
\displaystyle \frac{\partial^2 \phi}{\partial t^2}+\beta\frac{\partial \phi}{\partial t}=M\Delta\mu, \ \ \textbf{x}\in \Omega, t>0,\\
\displaystyle \mu=\Delta^2 \phi+2\Delta \phi+\alpha\phi+F^{\prime}(\phi), \ \ \textbf{x}\in \Omega, t>0,\\
\displaystyle \phi(\textbf{x},0)=\phi_0(\textbf{x}),
\end{array}
\right.
\end{equation}
where $\beta> 0$. But we should note that the energy \eqref{e_definition of free energy} may actually increase on some time intervals. The PFC and MPFC equations have close relationship. However, we should keep in mind that the original energy of the MPFC equation may increase in time on some time intervals. Thus it is desirable to introduce a pseudo energy. Besides, we can observe that  \eqref{e_continuous_model} does not satisfy the mass conservation due to the term $\frac{\partial^2 \phi}{\partial t^2}$. However, it is possible to verify that $\int_{\Omega}\frac{\partial \phi}{\partial t}d\textbf{x}=0$ with a suitable initial condition for $\frac{\partial \phi}{\partial t}$.

To fix the idea, we consider the homogeneous Neumann boundary conditions:
\begin{equation}\label{e_boundary and initial condition}
   \partial_\textbf{n}\phi|_{\partial \Omega}=0,\  \partial_\textbf{n}\Delta \phi|_{\partial \Omega}=0, \ 
    \partial_\textbf{n}\mu|_{\partial \Omega}=0,
  \end{equation}
  where $\textbf{n}$ is the unit outward normal vector of the domain $\Omega$.

\begin{remark}
The homogeneous Neumann boundary conditions are assumed to simplify the presentation.  The algorithm and  its analysis also hold for the periodic boundary conditions with very little  modification. One can refer to \cite[Lemma 3.6]{wang2011energy} for more detail about the periodic boundary conditions. 
While we only present the algorithm and analysis for homogeneous Neumann boundary conditions, we do   present some numerical results with  periodic boundary conditions in Section 5.
\end{remark}

To introduce an appropriate pseudo energy for the MPFC equation, we need to define the $H^{-1}$ inner-product \cite{baskaran2013energy}. Let $u_i \,(i=1,2) \in \{f\in L^2(\Omega)|\ \int_{\Omega}fd\textbf{x}=0\}:=L_0^2(\Omega)$, we define $\eta_{u_i}\in H^2(\Omega)\cap L_0^2(\Omega)$ to be the unique solution to the following problem:
\begin{equation}\label{e_pseudo_energy1}
\aligned
-\Delta \eta_{u_i}=u_i \ \ in \ \Omega, \ \ \partial_\textbf{n} \eta_{u_i} |_{\partial \Omega}=0.
\endaligned
\end{equation} 
Then we have $\eta_{u_i}=-\Delta^{-1}u_i$. Define 
\begin{equation}\label{e_pseudo_energy2}
\aligned
(u_1,u_2)_{H^{-1}}:=(\nabla\eta_{u_1},\nabla\eta_{u_2})_{L^2}.
\endaligned
\end{equation} 
 Using integration by parts, we can obtain
\begin{equation}\label{e_pseudo_energy3}
\aligned
(u_1,u_2)_{H^{-1}}=-(\Delta^{-1}u_1,u_2)_{L^2}=-(\Delta^{-1}u_2,u_1)_{L^2}=(u_2,u_1)_{H^{-1}}.
\endaligned
\end{equation}
Then we define $\|u\|_{H^{-1}}=\sqrt{(u,u)_{H^{-1}}}$ for every $u\in L^2_0(\Omega)$.


In order to construct an efficient scheme for the MPFC equation \eqref{e_continuous_model}, we first reformulate it using the so called SAV approach \cite{shen2018scalar}.  Introducing two auxiliary functions as follows:
\begin{equation}\label{e_auxiliary1}
\aligned
\psi=\frac{\partial \phi}{\partial t},\ r=\sqrt{E_1(\phi)}:=\sqrt{\int_{\Omega}F(\phi)d\textbf{x}}.
\endaligned
\end{equation} 
Then the MPFC equation \eqref{e_continuous_model} can be recast as the following system:
  \begin{subequations}\label{e_model_recast}
    \begin{align}
    &\frac{\partial \psi}{\partial t}+\beta\psi=M\Delta \mu,   \label{e_model_recastA}\\
   & \mu=\Delta^2 \phi+2\Delta \phi+\alpha\phi+\frac{r(t)}{\sqrt{E_1(\phi)}}F^{\prime}(\phi),   \label{e_model_recastB}\\
    &r_t=\frac{1}{2\sqrt{E_1(\phi)}}\int_{\Omega}F^{\prime}(\phi)\phi_t d\textbf{x}. \label{e_model_recastC}
    \end{align}
  \end{subequations}
  Define the pseudo energy
  \begin{equation}\label{e_pseudo_energy}
\aligned
\mathcal{E}(\phi,r,\psi)=\int_{\Omega}(\frac{1}{2}(\Delta \phi)^2-|\nabla \phi|^2+\frac \alpha 2 \phi^2)d\textbf{x}+r^2+\frac{1}{2M}\|\psi\|_{H^{-1}}^2,
\endaligned
\end{equation} 
which requires that $\int_{\Omega}\psi=0$ for well posedness. 
 As long as $\psi=\frac{\partial \phi}{\partial t}$ is of mean zero, we can obtain the following 
 dissipation law:
\begin{equation}\label{e_evolution of pseudo energy}
\aligned
\frac{d }{dt}\mathcal{E}(\phi,\xi,r,\psi)=&\int_{\Omega}\mu\frac{\partial \phi}{\partial t}d\textbf{x}-\frac 1 M\int_{\Omega}\Delta \eta_{\psi}\frac{\partial \eta_{\psi}}{\partial t}d\textbf{x}\\
=&(\mu,\frac{\partial \phi}{\partial t})-\frac 1 M(\psi,\Delta^{-1}\frac{\partial \psi}{\partial t})\\
=&\frac{\beta}{M}(\psi,\Delta^{-1}\psi)=-\frac{\beta}{M}\|\psi\|_{H^{-1}}^2\leq 0,
\endaligned
\end{equation} 
where $\eta_{\psi}=(-\Delta )^{-1}\psi$. 

 \subsection{A second-order semi-discrete scheme} 
Let $N>0$ be a positive integer and $J=(0,T]$. Set $\Delta t=T/N,\ t^n=n\Delta t,\ \ \rm{for} \ n\leq N,$ where $T$ is the final time. The second-order semi-discrete scheme based on the Crank-Nicolsion method for \eqref{e_model_recast} is as follows:

Assuming $\phi^n$, $\psi^n$ and $r^n$ are known, then we update $\phi^{n+1}$, $\psi^{n+1}$ and $r^{n+1}$ by solving

 \begin{numcases}{}
 \psi^{n+1}-\psi^{n}+\beta\Delta t\psi^{n+1/2}=M\Delta t\Delta \mu^{n+1/2}, \label{e_semi_second-order1}\\
 \Delta t\psi^{n+1/2}=\phi^{n+1}-\phi^n, \label{e_semi_second-order2}\\
 \mu^{n+1/2}=\Delta^2 \phi^{n+1/2}+2\Delta \tilde{\phi}^{n+1/2}+\alpha \phi^{n+1/2}\notag\\
 \ \ \ \ \ +\frac{r^{n+1/2}}{\sqrt{E_1 (\tilde{\phi}^{n+1/2})}}F^{\prime}(\tilde{\phi }^{n+1/2}),  \label{e_semi_second-order3}\\
r^{n+1}-r^n=\frac{1}{2\sqrt{E_1 (\tilde{\phi }^{n+1/2}) }}(F^{\prime}(\tilde{\phi }^{n+1/2}),
\phi^{n+1}-\phi^n),  \label{e_semi_second-order4}
\end{numcases}
where $f^{n+1/2}=(f^{n+1}+f^{n})/2$ and  $\tilde{f }^{n+1/2}=(3 f ^n-f^{n-1})/2$ for any function $f$. For the case of $n=0$, we can computer $\tilde{\phi }^{1/2}$ by the first-order scheme.

\begin{theorem}\label{thm: semi-discrete mass conservation}
The scheme (\ref{e_semi_second-order1})-(\ref{e_semi_second-order4}) is mass conserving, i.e., $ \int_{\Omega} \phi^{n+1} d\textbf{x}=  \int_{\Omega} \phi^{n} d\textbf{x} $ for all $n$, and unconditionally stable in the sense that
\begin{equation}\label{semi-discrete energy stability1}
\aligned
\tilde{\mathcal{E}}(\phi^{n+1},r^{n+1},\psi^{n+1})-\tilde{\mathcal{E}}(\phi^n,r^n,\psi^n)\leq -\frac{\beta}{M}\Delta t\|\psi^{n+1/2}\|_{H^{-1}}^2,
\endaligned
\end{equation}
where $\tilde{\mathcal{E}}(\phi^n,r^n,\psi^n)=\mathcal{E}(\phi^n,r^n,\psi^n)+\frac 1 2\|\nabla \phi^{n}-\nabla \phi^{n-1}\|^2$.
\end{theorem}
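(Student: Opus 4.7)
The plan is to prove the two conclusions in order, since the $H^{-1}$-norms appearing in the stability inequality only make sense for mean-zero functions.

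For mass conservation I would integrate (\ref{e_semi_second-order1}) over $\Omega$ and use $\partial_{\mathbf{n}}\mu|_{\partial\Omega}=0$ to kill $\int_{\Omega}\Delta\mu^{n+1/2}\,d\mathbf{x}$, which produces the two-step recursion
\begin{equation*}
\left(1+\tfrac{\beta\Delta t}{2}\right)\int_{\Omega}\psi^{n+1}\,d\mathbf{x}=\left(1-\tfrac{\beta\Delta t}{2}\right)\int_{\Omega}\psi^{n}\,d\mathbf{x}.
\end{equation*}
Together with $\int_{\Omega}\psi^{0}\,d\mathbf{x}=0$ (the ``suitable initial condition'' noted after (\ref{e_continuous_model})), induction gives $\int_{\Omega}\psi^{n}\,d\mathbf{x}=0$ for all $n$, so all the $H^{-1}$-norms in the statement are well defined. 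Integrating (\ref{e_semi_second-order2}) then immediately yields $\int_{\Omega}\phi^{n+1}\,d\mathbf{x}=\int_{\Omega}\phi^{n}\,d\mathbf{x}$.

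For the energy estimate I would test (\ref{e_semi_second-order1}) against $M^{-1}(-\Delta)^{-1}\psi^{n+1/2}$. Using the symmetry of $(-\Delta)^{-1}$ on $L^{2}_{0}(\Omega)$ and integrating by parts with the compatible Neumann conditions on $\mu^{n+1/2}$ and $(-\Delta)^{-1}\psi^{n+1/2}$, the test identity becomes
\begin{equation*}
\tfrac{1}{2M}\bigl(\|\psi^{n+1}\|_{H^{-1}}^{2}-\|\psi^{n}\|_{H^{-1}}^{2}\bigr)+\tfrac{\beta\Delta t}{M}\|\psi^{n+1/2}\|_{H^{-1}}^{2}=-(\mu^{n+1/2},\phi^{n+1}-\phi^{n}),
\end{equation*}
where (\ref{e_semi_second-order2}) has been used to convert $\Delta t\,\psi^{n+1/2}$ into $\phi^{n+1}-\phi^{n}$. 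Substituting (\ref{e_semi_second-order3}) into the right-hand side, three of the four contributions telescope cleanly: $(\Delta^{2}\phi^{n+1/2},\phi^{n+1}-\phi^{n})=\tfrac{1}{2}\bigl(\|\Delta\phi^{n+1}\|^{2}-\|\Delta\phi^{n}\|^{2}\bigr)$ after two integrations by parts using $\partial_{\mathbf{n}}\phi=\partial_{\mathbf{n}}\Delta\phi=0$; $\alpha(\phi^{n+1/2},\phi^{n+1}-\phi^{n})=\tfrac{\alpha}{2}\bigl(\|\phi^{n+1}\|^{2}-\|\phi^{n}\|^{2}\bigr)$; and the SAV nonlinear term, after invoking the discrete chain rule (\ref{e_semi_second-order4}), collapses to $2r^{n+1/2}(r^{n+1}-r^{n})=(r^{n+1})^{2}-(r^{n})^{2}$.

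The genuine obstacle is the extrapolated piece $2(\Delta\tilde{\phi}^{n+1/2},\phi^{n+1}-\phi^{n})$, because $\tilde{\phi}^{n+1/2}$ is not the midpoint and therefore does not telescope by itself. I would write $\tilde{\phi}^{n+1/2}=\phi^{n+1/2}-\tfrac{1}{2}(\phi^{n+1}-2\phi^{n}+\phi^{n-1})$. The midpoint part produces precisely $-\bigl(\|\nabla\phi^{n+1}\|^{2}-\|\nabla\phi^{n}\|^{2}\bigr)$, accounting for the $-|\nabla\phi|^{2}$ contribution in $\mathcal{E}$. The correction part, integrated by parts, becomes $\bigl(\nabla(\phi^{n+1}-\phi^{n})-\nabla(\phi^{n}-\phi^{n-1}),\nabla(\phi^{n+1}-\phi^{n})\bigr)$, and the elementary identity $(a-b,a)=\tfrac{1}{2}\bigl(\|a\|^{2}-\|b\|^{2}+\|a-b\|^{2}\bigr)$ rewrites it as
\begin{equation*}
\tfrac{1}{2}\|\nabla\phi^{n+1}-\nabla\phi^{n}\|^{2}-\tfrac{1}{2}\|\nabla\phi^{n}-\nabla\phi^{n-1}\|^{2}+\tfrac{1}{2}\|\nabla(\phi^{n+1}-2\phi^{n}+\phi^{n-1})\|^{2}.
\end{equation*}
The first two terms are exactly the telescoping increment of the stabilizer $\tfrac{1}{2}\|\nabla\phi^{n}-\nabla\phi^{n-1}\|^{2}$ appended to $\mathcal{E}$ in the definition of $\tilde{\mathcal{E}}$, and the third is non-negative and may be dropped. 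Collecting all contributions yields the equality
\begin{equation*}
\tilde{\mathcal{E}}(\phi^{n+1},r^{n+1},\psi^{n+1})-\tilde{\mathcal{E}}(\phi^{n},r^{n},\psi^{n})=-\tfrac{\beta\Delta t}{M}\|\psi^{n+1/2}\|_{H^{-1}}^{2}-\tfrac{1}{2}\|\nabla(\phi^{n+1}-2\phi^{n}+\phi^{n-1})\|^{2},
\end{equation*}
from which (\ref{semi-discrete energy stability1}) follows by discarding the last non-negative term. The main difficulty, and the reason $\mathcal{E}$ must be augmented to $\tilde{\mathcal{E}}$, is precisely this extrapolation mismatch; the algebraic identity above absorbs it without any smallness condition on $\Delta t$.
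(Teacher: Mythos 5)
Your proposal is correct and follows essentially the same route as the paper: the same mean-zero recursion for $\int_\Omega\psi^n$ giving mass conservation, the same telescoping identities for the $\Delta^2$, $\alpha$, and SAV terms, the same key identity for the extrapolated term $2(\Delta\tilde{\phi}^{n+1/2},\phi^{n+1}-\phi^n)$ whose mismatch is absorbed by the stabilizer in $\tilde{\mathcal{E}}$, and the same $H^{-1}$-testing of the $\psi$ equation (your pairing with $M^{-1}(-\Delta)^{-1}\psi^{n+1/2}$ is exactly the paper's use of the $(\cdot,\cdot)_{-1}$ inner product). The only difference is cosmetic: you substitute the $\mu$ equation into a single identity rather than testing each equation separately and then combining.
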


\begin{proof}
Taking the inner products of \eqref{e_semi_second-order1} with $1$ leads to 
\begin{equation}\label{semi-discrete mass conservation1}
\aligned
(\psi^{n+1}-\psi^{n},1)+\beta\Delta t(\psi^{n+1/2},1)=M\Delta t(\Delta \mu^{n+1/2},1).
\endaligned
\end{equation}
Similarly, by taking the inner products of \eqref{e_semi_second-order2} with $1$, we can obtain
\begin{equation}\label{semi-discrete mass conservation2}
\aligned
(\phi^{n+1}-\phi^n,1)=\Delta t(\psi^{n+1/2},1).
\endaligned
\end{equation}
Using the integration by parts, the term on the right hand side of \eqref{semi-discrete mass conservation1} can be transformed into
\begin{equation}\label{semi-discrete mass conservation3}
\aligned
M\Delta t(\Delta \mu^{n+1/2},1)=-M\Delta t\left((\nabla \mu^{n+1/2},\nabla 1)+(\nabla \mu^{n+1/2}, \nabla 1) \right)=0.
\endaligned
\end{equation}
Then \eqref{semi-discrete mass conservation1} can be recast as follows:
\begin{equation}\label{semi-discrete mass conservation4}
\aligned
(1+\frac \beta 2\Delta t)(\psi^{n+1},1)=(1-\frac \beta 2\Delta t)(\psi^{n},1).
\endaligned
\end{equation}
Combining \eqref{semi-discrete mass conservation4} with the condition on the initial condition $(\psi^0,1)=0$ leads to $(\psi^{n+1},1)=0$ for all $n\leq 0$. Recalling \eqref{semi-discrete mass conservation2}, we have 
$(\phi^{n+1},1)=(\phi^{n},1)$.

Next, we prove \eqref{semi-discrete energy stability1}.
Taking the inner products of \eqref{e_semi_second-order2} with $\mu^{n+1/2}$ gives
\begin{equation}\label{semi-discrete energy stability2}
\aligned
\Delta t(\psi^{n+1/2},\mu^{n+1/2})=(\phi^{n+1}-\phi^n,\mu^{n+1/2}).
\endaligned
\end{equation}
Taking the inner products of \eqref{e_semi_second-order3} with $\phi^{n+1}-\phi^n$, we have
\begin{equation}\label{semi-discrete energy stability3}
\aligned
&(\mu^{n+1/2},\phi^{n+1}-\phi^n)=(\Delta^2 \phi^{n+1/2}, \phi ^{n+1}-\phi^{n}) +2(\Delta \tilde{ \phi }^{n+1/2}, \phi^{n+1}-\phi^n)\\
&\ \ \ \ \ +\alpha( \phi ^{n+1/2}, \phi^{n+1}-\phi^{n}) +(\frac{r^{n+1/2}}{\sqrt{E_1(\tilde{ \phi }^{n+1/2})}}F^{\prime}(\tilde{ \phi }^{n+1/2}), \phi ^{n+1}-\phi ^n).
\endaligned
\end{equation}
The first three terms on the right-hand side of \eqref{semi-discrete energy stability3} can be estimated with the help of the integration by parts:
\begin{equation}\label{semi-discrete energy stability4}
\aligned
(\Delta^2 \phi^{n+1/2}, \phi^{n+1}-\phi^n) =\frac1 2(\|\Delta \phi^{n+1}\|^2-\|\Delta \phi^{n}\|^2),
\endaligned
\end{equation}

\begin{equation}\label{semi-discrete energy stability5}
\aligned
&2(\Delta \tilde{ \phi }^{n+1/2}, \phi^{n+1}- \phi^n) \\
=&-\|\nabla \phi ^{n+1}\|^2+\|\nabla \phi^{n}\|^2+\frac1 2(\|\nabla \phi^{n+1}-\nabla \phi^{n}\|^2-\|\nabla \phi^{n}-\nabla \phi^{n-1}\|^2)\\
&+\frac1 2\|\nabla \phi^{n+1}-2\nabla \phi^{n}+\nabla \phi^{n-1}\|^2,
\endaligned
\end{equation}
and 
\begin{equation}\label{semi-discrete energy stability5_add}
\aligned
\alpha(\phi^{n+1/2}, \phi^{n+1}-\phi^n) =\frac\alpha 2(\| \phi^{n+1}\| ^2-\| \phi^{n}\| ^2).
\endaligned
\end{equation}
Multiplying \eqref{e_semi_second-order4} by $(r^{n+1}+r^n)$ leads to
\begin{equation}\label{semi-discrete energy stability6}
\aligned
(r^{n+1})^2-(r^n)^2=(\frac{r^{n+1/2}}{\sqrt{E_1 (\tilde{ \phi }^{n+1/2})}}F^{\prime}(\tilde{ \phi }^{n+1/2} ), \phi^{n+1}-\phi^n).
\endaligned
\end{equation}
Combining \eqref{semi-discrete energy stability3} with \eqref{semi-discrete energy stability2} and  \eqref{semi-discrete energy stability4}-\eqref{semi-discrete energy stability6}, we have 
\begin{equation}\label{semi-discrete energy stability7}
\aligned
&\frac1 2(\|\Delta \phi^{n+1}\|^2-\|\Delta \phi^{n}\| ^2)
-\|\nabla \phi^{n+1}\|^2+\|\nabla \phi^{n}\|^2+(r^{n+1})^2-(r^n)^2\\
+&\frac1 2(\|\nabla \phi^{n+1}-\nabla \phi^{n}\|^2-\|\nabla \phi^{n}-\nabla \phi^{n-1}\|^2)\\
+&\frac1 2\|\nabla \phi^{n+1}-2\nabla \phi^{n}+\nabla \phi^{n-1}\|^2+\frac\alpha 2(\|\phi^{n+1}\|^2-\| \phi^{n}\| ^2)\\
=&\Delta t(\psi^{n+1/2}, \mu^{n+1/2}).
\endaligned
\end{equation}
Since recalling \eqref{e_semi_second-order1}, we can derive 
\begin{equation}\label{semi-discrete energy stability8}
\aligned
\frac{1}{2M}(\|\psi^{n+1}\|_{H^{-1}}^2-&\|\psi^n\|_{H^{-1}}^2)=\frac 1 M(\psi^{n+1}-\psi^n,\psi^{n+1/2})_{-1}\\
=&-\frac 1 M(\psi^{n+1}-\psi^n,\Delta^{-1}\psi^{n+1/2})\\
=&-\frac{\beta}{M}\Delta t\|\psi^{n+1/2}\|_{H^{-1}}^2-\Delta t( \mu^{n+1/2},\psi^{n+1/2}).
\endaligned
\end{equation}
Finally, combining \eqref{semi-discrete energy stability7} with \eqref{semi-discrete energy stability8} gives the desired result. 
\end{proof}

Since the scheme (\ref{e_semi_second-order1})-(\ref{e_semi_second-order4}) is linear, one can also  show that it admits a unique solution, and can be efficiently implemented. For the sake of brevity, we shall provide detail only for the fully discretized scheme presented in the next section.

    
 \section{Fully discrete schemes and their properties}  
 In this section, we construct two linear SAV block-centered finite difference schemes  for the SAV reformulated  MPFC equation \eqref{e_model_recast}. 
 
 \subsection{Full discrete schemes based on block-centered finite difference method} 
First we describe briefly  the block-centered finite difference framework that we will employ to define and analyze our schemes. To fix the idea, we set  $\Omega=(0,L_x)\times 
(0,L_y)$, although the algorithm and analysis presented below apply also to the one- and three-dimensional rectangular domains.

We begin with the definitions of grid points and difference operators. Let $L_x=N_xh_x$ and $L_y=N_yh_y$, where $h_x$ and $h_y$ are grid spacings in $x$ and $y$ directions, and $N_x$ and 
$N_y$ are the number of grids along the $x$ and $y$ coordinates, respectively. The grid points are denoted by
$$(x_{i+1/2},y_{j+1/2}),\ \ i=0,...,N_x,\ \ j=0,...,N_y,$$
and 
\begin{equation*}
\aligned
&x_{i}=(x_{i-\frac{1}{2}}+x_{i+\frac{1}{2}})/2, \ \ i=1,...,N_x,\\
&y_{j}=(y_{j-\frac{1}{2}}+y_{j+\frac{1}{2}})/2, \ \ j=1,...,N_y.
\endaligned
\end{equation*}
Define
\begin{equation*}
\aligned
&[d_{x}g]_{i+\frac{1}{2},j}=(g_{i+1,j}-g_{i,j})/h_x,\\
&[d_{y}g]_{i,j+\frac{1}{2}}=(g_{i,j+1}-g_{i,j})/h_y,\\
&[D_{x}g]_{i,j}=(g_{i+\frac{1}{2},j}-g_{i-\frac{1}{2},j})/h_x,\\
&[D_{y}g]_{i,j}=(g_{i,j+\frac{1}{2}}-g_{i,j-\frac{1}{2}})/h_y,\\
&[\Delta_hg]_{i,j}=D_x(d_xg)_{i,j}+D_y(d_yg)_{i,j}.
\endaligned
\end{equation*}
Define the discrete inner products and norms as follows,
\begin{equation*}
\aligned
&(f,g)_{m}=\sum\limits_{i=1}^{N_{x}}\sum\limits_{j=1}^{N_{y}}h_xh_yf_{i,j}g_{i,j},\\
&(f,g)_{x}=\sum\limits_{i=1}^{N_{x}-1}\sum\limits_{j=1}^{N_{y}}h_xh_yf_{i+\frac{1}{2},j}g_{i+\frac{1}{2},j},\\
&(f,g)_{y}=\sum\limits_{i=1}^{N_{x}}\sum\limits_{j=1}^{N_{y}-1}h_xh_yf_{i,j+\frac{1}{2}}g_{i,j+\frac{1}{2}}.
\endaligned
\end{equation*}
\begin{lemma}\label{le_discrete-integration-by-part}
 Let $q_{i,j},w_{1,i+1/2,j}\ and \  w_{2,i,j+1/2} $ be any values such that $w_{1,1/2,j}=w_{1,N_x+1/2,j}=w_{2,i,1/2}=w_{2,i,N_y+1/2}=0$, then
$$(q,D_xw_1)_m=-(d_xq,w_1)_x,$$
$$(q,D_yw_2)_m=-(d_yq,w_2)_y.$$
\end{lemma}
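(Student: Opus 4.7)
The proof is a discrete summation-by-parts, i.e., a direct Abel rearrangement in each coordinate direction, and it should be presented as such. My plan is to establish the $x$-direction identity in detail and then note that the $y$-direction identity follows by an identical argument with the roles of the indices swapped.

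For the $x$-direction, I would start from the definitions and write
\[
(q,D_x w_1)_m = \sum_{i=1}^{N_x}\sum_{j=1}^{N_y} h_x h_y\, q_{i,j}\,\frac{w_{1,i+\frac12,j} - w_{1,i-\frac12,j}}{h_x} = \sum_{j=1}^{N_y} h_y \sum_{i=1}^{N_x} q_{i,j}\bigl(w_{1,i+\frac12,j} - w_{1,i-\frac12,j}\bigr).
\]
Then, for each fixed $j$, I would split the inner sum into two pieces and shift the index $i\mapsto i+1$ in the second one, yielding
\[
\sum_{i=1}^{N_x} q_{i,j} w_{1,i+\frac12,j} - \sum_{i=0}^{N_x-1} q_{i+1,j} w_{1,i+\frac12,j} = -\sum_{i=1}^{N_x-1}\bigl(q_{i+1,j}-q_{i,j}\bigr) w_{1,i+\frac12,j} + q_{N_x,j}w_{1,N_x+\frac12,j} - q_{1,j} w_{1,\frac12,j}.
\]
The two leftover boundary contributions vanish thanks to the hypothesis $w_{1,1/2,j}=w_{1,N_x+1/2,j}=0$.

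What remains is to re-express the surviving sum in terms of $d_x$. Since $q_{i+1,j}-q_{i,j}=h_x [d_x q]_{i+1/2,j}$, I would substitute and multiply back by $h_y$ to get
\[
(q,D_x w_1)_m = -\sum_{j=1}^{N_y}\sum_{i=1}^{N_x-1} h_x h_y\, [d_x q]_{i+\frac12,j}\, w_{1,i+\frac12,j} = -(d_x q, w_1)_x,
\]
which is exactly the claimed identity. The $y$-direction statement is obtained in the same way, interchanging the roles of $i$ and $j$, using $D_y$, $d_y$, and the boundary condition $w_{2,i,1/2}=w_{2,i,N_y+1/2}=0$ to eliminate the corresponding boundary terms.

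There is really no conceptual obstacle here; the only thing to be careful about is matching the half-integer indexing conventions of the block-centered grid when shifting the summation variable, so that the boundary terms that survive the Abel rearrangement are exactly the two that are annihilated by the given boundary hypotheses on $w_1$ (respectively $w_2$). Once the indexing is done correctly, the proof is a one-line Abel summation in each direction.
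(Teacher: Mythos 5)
Your proof is correct: the index-shift (Abel summation) argument, with the two boundary terms killed by the hypotheses $w_{1,1/2,j}=w_{1,N_x+1/2,j}=0$ (resp. $w_{2,i,1/2}=w_{2,i,N_y+1/2}=0$), is exactly the standard summation-by-parts computation this lemma encodes, and the paper itself states the lemma without proof since this is the intended routine verification. Your bookkeeping of the half-integer indices and the range $i=1,\dots,N_x-1$ in $(\cdot,\cdot)_x$ matches the paper's definitions, so nothing is missing.
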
 
\medskip
Next we define the discrete $H^{-1}$ inner-product. Suppose $\eta_{\phi_i}\in \{f|(f,1)_m=0\}:=\mathcal{H}$ to be the unique solution to the following problem:
\begin{equation}\label{e_discrete_inner-product1}
\aligned
-\Delta_h\eta_{\phi_i}=\phi_i,
\endaligned
\end{equation} 
where $\eta_{\phi_i}$ satisfies the discrete homogenous Neumann boundary condition
 \begin{equation}\label{e_first-order10}
  \left\{
   \begin{array}{l}
    \displaystyle (\eta_{\phi_i})_{0,j}=(\eta_{\phi_i})_{1,j}, \ (\eta_{\phi_i})_{N_x+1,j}=(\eta_{\phi_i})_{N_x,j}, \ \ j=1,2,\ldots,N_y,\\
    \displaystyle (\eta_{\phi_i})_{k,0}=(\eta_{\phi_i})_{k,1}, \ \ (\eta_{\phi_i})_{k,N_y+1}=(\eta_{\phi_i})_{k,N_y}, \ \ k=1,2,\ldots,N_x.
   \end{array}
   \right.
  \end{equation}    
We define the bilinear form 
$$(\phi_1,\phi_2)_{-1}=(d_x\eta_{\phi_1},d_x\eta_{\phi_2})_x+(d_y\eta_{\phi_1},d_y\eta_{\phi_2})_y,$$
for any $\phi_1,\phi_2\in \mathcal{H}$.
Then we can obtain that $(\phi_1,\phi_2)_{-1}$ is an inner product on the space $\mathcal{H}$. Moreover, we have 
$$(\phi_1,\phi_2)_{-1}=-(\phi_1,\Delta^{-1}_h\phi_2)_m=-(\Delta^{-1}_h\phi_1,\phi_2)_m.$$
Then we can define the discrete $H^{-1}$ norm $\|\phi\|_{-1}=\sqrt{(\phi,\phi)_{-1}}$.

Hereafter, we use $C$, with or without subscript, to denote a positive
constant, which could have different values at different appearances.

 Let us denote by $\{Z^n, W^n, R^n,\Psi^n\}_{n=0}^{N}$ the block-centered finite difference approximations to $\{\phi^n,\mu^n, r^n,\psi^n\}_{n=0}^{N}$.  The second-order scheme defined by the Crank-Nicolsion method for  \eqref{e_model_recast} is as follows:
 
  Set the boundary condition as 
 \begin{equation}\label{e_first-order1}
  \left\{
   \begin{array}{l}
    \displaystyle Z_{0,j}=Z_{1,j}, \ Z_{N_x+1,j}=Z_{N_x,j}, \ \ j=1,2,\ldots,N_y,\\
    \displaystyle Z_{i,0}=Z_{i,1}, \ \ Z_{i,N_y+1}=Z_{i,N_y}, \ \ i=1,2,\ldots,N_x, \\
    \displaystyle W_{0,j}=W_{1,j}, \ W_{N_x+1,j}=W_{N_x,j}, \ \ j=1,2,\ldots,N_y,\\
    \displaystyle W_{i,0}=W_{i,1}, \ \ W_{i,N_y+1}=W_{i,N_y}, \ \ i=1,2,\ldots,N_x, \\
        \displaystyle \Delta_h Z_{0,j}=\Delta_h Z_{1,j}, \ \Delta_h Z_{N_x+1,j}=\Delta_h Z_{N_x,j}, \ \ j=1,2,\ldots,N_y,\\
    \displaystyle \Delta_h Z_{i,0}=\Delta_h Z_{i,1}, \ \ \Delta_h Z_{i,N_y+1}=\Delta_h Z_{i,N_y}, \ \ i=1,2,\ldots,N_x.
   \end{array}
   \right.
  \end{equation}
 We find $\{Z^{n+1}, W^{n+1}, R^{n+1},\Psi^{n+1}\}_{n=0}^{N-1}$ such that
 \begin{numcases}{}
 \Psi^{n+1}-\Psi^{n}+\beta\Delta t\Psi^{n+1/2}=M\Delta t\Delta_hW^{n+1/2}, \label{e_second-order1}\\
 \Delta t\Psi^{n+1/2}=Z^{n+1}-Z^n, \label{e_second-order2}\\
 W^{n+1/2}=\Delta_h^2Z^{n+1/2}+2\Delta_h\tilde{Z}^{n+1/2}+\alpha Z^{n+1/2}\notag\\
 \ \ \ \ \ +\frac{R^{n+1/2}}{\sqrt{E_1^h(\tilde{Z}^{n+1/2})}}F^{\prime}(\tilde{Z}^{n+1/2}),  \label{e_second-order3}\\
R^{n+1}-R^n=\frac{1}{2\sqrt{E_1^h(\tilde{Z}^{n+1/2}) }}(F^{\prime}(\tilde{Z}^{n+1/2}),
Z^{n+1}-Z^n)_m,  \label{e_second-order4}
\end{numcases}
where $f^{n+1/2}=(f^{n+1}+f^{n})/2,\ f=W,\Psi, R$ and  $\tilde{Z}^{n+1/2}=(3Z^n-Z^{n-1})/2$. For the case of $n=0$, we can computer $\tilde{Z}^{1/2}$ by the first-order scheme.
\subsection{Efficient implementation}\label{eff_impl}
 
A  remarkable property about the above schemes is that it can be solved very efficiently.
We demonstrate the detail procedure to solve the second-order SAV scheme (\ref{e_second-order1})-(\ref{e_second-order4}). Indeed, we can eliminate $\Psi^{n+1}$, $W^{n+1}$, $R^{n+1}$ from (\ref{e_second-order1})-(\ref{e_second-order4}) to obtain
\begin{equation}\label{e_implementation1}
\aligned
&(\frac{2}{\Delta t}+\beta)\frac{Z^{n+1}-Z^n}{\Delta t}-\frac{2 }{\Delta t}  \Psi^{n}=M( \frac{1}{2} \Delta_h^3Z^{n+1}+ \frac{1}{2} \Delta_h^3Z^{n} + 2\Delta_h^2 \tilde{Z}^{n+1/2} \\
&\ \ \ \ \ \ + \frac{\alpha}{2} \Delta_hZ^{n+1} + \frac{\alpha}{2} \Delta_hZ^{n} ) 
 +M \frac{\Delta_hF^{\prime}(\tilde{Z}^{n+1/2} )}{ \sqrt{E_1^h(\tilde{Z}^{n+1/2} )}}\left(R^n+ (\frac{F^{\prime}(\tilde{Z}^{n+1/2} )}{4\sqrt{E_1^h(\tilde{Z}^{n+1/2} )}},Z^{n+1}-Z^n)_m\right).
\endaligned
\end{equation} 
Let $b^n=\frac{F^{\prime}(\tilde{Z}^{n+1/2} )}{\sqrt{E_1^h(\tilde{Z}^{n+1/2} )}}$, then the above equation can be transformed into the following:
\begin{equation}\label{e_implementation2}
\aligned
&\mathcal{A}Z^{n+1}-\frac{M}{4}(b^n,Z^{n+1})_m\Delta_h b^n=f^n,
\endaligned
\end{equation}
where $\mathcal{A}=(\frac{2 }{\Delta t^2}+\frac{\beta}{\Delta t})I-\frac{M}{2} \Delta_h^3-\frac{M}{2} \alpha\Delta_h$ and 
the right term 
\begin{equation*}
\aligned
f^n=& \frac{2}{\Delta t}\Psi^n+\left( (\frac{2}{\Delta t^2}+\frac{\beta}{\Delta t})I+ \frac{M}{2} \Delta_h^3 + \frac{M}{2} \alpha\Delta_h \right)Z^n \\
& +2M\Delta_h^2 \tilde{Z}^{n+1/2} +M\left(R^n-\frac 1 4(b^n,Z^n)_m\right)\Delta_hb^n.
\endaligned
\end{equation*}
In order to solve the above equation, we should determine $(b^n,Z^{n+1})_m$ first. To this end, multiplying \eqref{e_implementation2} by $\mathcal{A}^{-1}$ leads to
\begin{equation}\label{e_implementation3}
\aligned
&Z^{n+1}-\frac{M}{4}(b^n,Z^{n+1})_m\mathcal{A}^{-1}\Delta_h b^n=\mathcal{A}^{-1}f^n.
\endaligned
\end{equation}
Multiplying \eqref{e_implementation3} by $b^n_{i,j}h_xh_y$, and making summation on $i,j$ for $1\leq i\leq N_x,\ 1\leq j\leq N_y$, we have
\begin{equation}\label{e_implementation5}
\aligned
(b^n,Z^{n+1})_m=\frac{(b^n,\mathcal{A}^{-1}f^n)_m}{1-\frac{M}{4 }(\mathcal{A}^{-1}\Delta_h b^n,b^n)_m}.
\endaligned
\end{equation}
Since $M>0$ and for $\alpha,\,\beta\ge 0$, $\mathcal{A}^{-1}\Delta_h$ is negative definite. So 
$(b^n,Z^{n+1})_m$ can be uniquely determined from above. Finally, we can get $Z^{n+1}$ by \eqref{e_implementation3}. Since the scheme is linear, the above procedure shows that it admits a unique solution.

In conclusion, the second-order SAV scheme (\ref{e_second-order1})-(\ref{e_second-order4}) can be effectively implemented in the following algorithm:
\begin{algorithm}
\ \textbf{Given:} $\Psi^n$, $Z^n$, $R^n$ and $b^n$.
\begin{algorithmic}
\STATE Step 1. Computer $(\mathcal{A}^{-1}\Delta_h b^n,b^n)_m$. This can be accomplished by 
solving a sixth-order equation with constant coefficients.\\
\STATE Step 2. Calculate $(b^n,Z^{n+1})_m$ using \eqref{e_implementation5}, which requires solving another sixth-order equation $\mathcal{A}^{-1}f^n$ with constant coefficients.\\
\STATE Step 3. Update $Z^{n+1}$ by
$Z^{n+1}=\frac{M}{4 }(b^n,Z^{n+1})_m\mathcal{A}^{-1}\Delta_h b^n+\mathcal{A}^{-1}f^n.
$
\end{algorithmic}
\end{algorithm}

While the second-order scheme above is suitable in most situations, there are cases, e.g., when only steady state solutions are desired,  where a first-order scheme is preferred. For the readers' convenience, we list 
 the first-order SAV scheme below:
 
We find $\{Z^{n+1}, W^{n+1}, R^{n+1},\Psi^{n+1}\}_{n=0}^{N-1}$ such that
 \begin{numcases}{}
 \Psi^{n+1}-\Psi^{n}+\beta\Delta t\Psi^{n+1}=M\Delta t\Delta_hW^{n+1}, \label{e_first-order2}\\
 \Delta t\Psi^{n+1}=Z^{n+1}-Z^n, \label{e_first-order3}\\
 W^{n+1}=\Delta_h^2Z^{n+1}+2\Delta_hZ^{n}+\alpha Z^{n+1}+\frac{R^{n+1}}{\sqrt{E_1^h(Z^n)}}F^{\prime}(Z^n),  \label{e_first-order4}\\
R^{n+1}-R^n=\frac{1}{2\sqrt{E_1^h(Z^{n})}}(F^{\prime}(Z^{n}),
Z^{n+1}-Z^n)_m,  \label{e_first-order5}
\end{numcases}
where the discrete form of $E_1(Z^{n})$ is defined as follows:
$$E_1^h(Z^{n})=\sum\limits_{i=1}^{N_{x}}\sum\limits_{j=1}^{N_{y}}h_xh_yF(Z^{n}_{i,j}).$$

 \subsection{Mass conservation and unconditional energy stability}  
 
Define the discrete pseudo energy 
\begin{equation}\label{e_discrete pseudo energy}
\aligned
\mathcal{E}_d(Z^n,R^n,\Psi^n)=\frac 1 2 \|\Delta_hZ^n\|_m^2-\|\nabla_hZ^n\|^2+\frac \alpha 2 \|Z^{n+1}\|_m^2+R^2+\frac{1}{2M}\|\Psi^n\|_{H^{-1}}^2,
\endaligned
\end{equation} 
where $\|\nabla_hZ\|=\sqrt{(d_xZ,d_xZ)_x+(d_yZ,d_yZ)_y}$.

\begin{theorem}\label{thm: second-order mass conservation}
The scheme (\ref{e_second-order1})-(\ref{e_second-order4}) admits a unique solution, is mass conserving, i.e., $(Z^{n+1},1)_m=(Z^{n},1)_m$ for all $n$, and 
unconditionally stable in the sense that
\begin{equation}\label{second-order energy stability1}
\aligned
\tilde{\mathcal{E}}_d(Z^{n+1},R^{n+1},\Psi^{n+1})-\tilde{\mathcal{E}}_d(Z^n,R^n,\Psi^n)\leq -\frac{\beta}{M}\Delta t\|\Psi^{n+1/2}\|_{H^{-1}}^2,
\endaligned
\end{equation}
where $\tilde{\mathcal{E}}_d(Z^n,R^n,\Psi^n)=\mathcal{E}_d(Z^n,R^n,\Psi^n)+\frac 1 2\|\nabla_hZ^{n}-\nabla_hZ^{n-1}\|^2$.
\end{theorem}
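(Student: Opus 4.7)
The plan is to mirror Theorem \ref{thm: semi-discrete mass conservation} at the fully discrete level, substituting the discrete integration-by-parts identity (Lemma \ref{le_discrete-integration-by-part}) and the Neumann boundary conditions \eqref{e_first-order1} for their continuous counterparts. Existence and uniqueness follow immediately from the elimination procedure in Subsection \ref{eff_impl}: after reducing the system to \eqref{e_implementation2}, the scalar denominator appearing in \eqref{e_implementation5} is strictly positive because $-\mathcal{A}^{-1}\Delta_h$ is positive definite when $\alpha,\beta\ge 0$, so $Z^{n+1}$ is determined uniquely and the remaining unknowns $\Psi^{n+1}$, $W^{n+1}$, $R^{n+1}$ follow by back-substitution.

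For mass conservation, I would pair \eqref{e_second-order1} with the constant function $1$ in the discrete inner product $(\cdot,\cdot)_m$. Two applications of Lemma \ref{le_discrete-integration-by-part} together with \eqref{e_first-order1} kill the right-hand side, leaving the two-term recursion
\begin{equation*}
(1+\tfrac{\beta}{2}\Delta t)(\Psi^{n+1},1)_m = (1-\tfrac{\beta}{2}\Delta t)(\Psi^n,1)_m.
\end{equation*}
Under the standing assumption $(\Psi^0,1)_m=0$ (the discrete analogue of the hypothesis used in Theorem \ref{thm: semi-discrete mass conservation}), this propagates to $(\Psi^n,1)_m=0$ for all $n$, and \eqref{e_second-order2} then gives the conservation identity $(Z^{n+1},1)_m=(Z^n,1)_m$.

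For the energy estimate I would follow the template \eqref{semi-discrete energy stability2}--\eqref{semi-discrete energy stability7} verbatim, only replacing the continuous operators with their discrete analogues. Pair \eqref{e_second-order2} with $W^{n+1/2}$ and \eqref{e_second-order3} with $Z^{n+1}-Z^n$, both in $(\cdot,\cdot)_m$. The first three terms on the right of the second pairing telescope, via Lemma \ref{le_discrete-integration-by-part} and \eqref{e_first-order1}, into $\tfrac{1}{2}(\|\Delta_h Z^{n+1}\|_m^2-\|\Delta_h Z^n\|_m^2)$, the extrapolation identity for $\tilde Z^{n+1/2}=\tfrac32 Z^n-\tfrac12 Z^{n-1}$ that is the discrete counterpart of \eqref{semi-discrete energy stability5}, and $\tfrac{\alpha}{2}(\|Z^{n+1}\|_m^2-\|Z^n\|_m^2)$; the fourth, nonlinear term collapses to $(R^{n+1})^2-(R^n)^2$ upon multiplying \eqref{e_second-order4} by $R^{n+1}+R^n$ and using the explicit definition of $R^{n+1/2}$. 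Finally, taking the discrete $H^{-1}$ pairing of \eqref{e_second-order1} with $\Psi^{n+1/2}$ and invoking \eqref{e_second-order2} together with the selfadjointness of $-\Delta_h^{-1}$ recorded after \eqref{e_first-order10} yields
\begin{equation*}
\tfrac{1}{2M}(\|\Psi^{n+1}\|_{H^{-1}}^2-\|\Psi^n\|_{H^{-1}}^2) = -\tfrac{\beta}{M}\Delta t\|\Psi^{n+1/2}\|_{H^{-1}}^2 - \Delta t(W^{n+1/2},\Psi^{n+1/2})_m.
\end{equation*}
Adding this to the sum of the previous relations cancels the cross term $\Delta t(\Psi^{n+1/2},W^{n+1/2})_m$, and after dropping the nonnegative remainder $\tfrac{1}{2}\|\nabla_h Z^{n+1}-2\nabla_h Z^n+\nabla_h Z^{n-1}\|^2$ one recovers \eqref{second-order energy stability1}, with the gradient shift $\tfrac12\|\nabla_h Z^{n+1}-\nabla_h Z^n\|^2$ absorbed into $\tilde{\mathcal{E}}_d$.

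The main obstacle is the extrapolation identity for the $2\Delta_h\tilde Z^{n+1/2}$ term. Unlike the symmetric midpoint $Z^{n+1/2}$, $\tilde Z^{n+1/2}$ is a non-symmetric combination of $Z^n$ and $Z^{n-1}$, and obtaining a telescoping quadratic form from $2(\Delta_h\tilde Z^{n+1/2},Z^{n+1}-Z^n)_m$ requires a careful rearrangement that produces both a $\|\nabla_h\cdot\|^2$ telescoping piece and the shift term that must be incorporated into $\tilde{\mathcal{E}}_d$. Once this discrete identity is established through Lemma \ref{le_discrete-integration-by-part}, the remainder of the proof is a mechanical transcription of the semi-discrete argument.
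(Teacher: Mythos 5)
Your proposal is correct and follows essentially the same route as the paper: uniqueness from the elimination/implementation argument, mass conservation from summing \eqref{e_second-order1}--\eqref{e_second-order2} against $1$ and the recursion $(1+\tfrac{\beta}{2}\Delta t)(\Psi^{n+1},1)_m=(1-\tfrac{\beta}{2}\Delta t)(\Psi^{n},1)_m$ with $(\Psi^0,1)_m=0$, and the energy law from pairing \eqref{e_second-order2} with $W^{n+1/2}$, \eqref{e_second-order3} with $Z^{n+1}-Z^n$, \eqref{e_second-order4} with $R^{n+1}+R^n$, and \eqref{e_second-order1} in the discrete $H^{-1}$ pairing, exactly as in \eqref{second-order energy stability2}--\eqref{second-order energy stability8}. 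The extrapolation identity you flag as the main obstacle is precisely the paper's \eqref{second-order energy stability5}, and the structure you describe for it (telescoping gradient terms, the shift difference absorbed into $\tilde{\mathcal{E}}_d$, and the nonnegative second-difference term that is dropped) is the correct one.
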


\begin{proof}
Since the scheme (\ref{e_second-order1})-(\ref{e_second-order4}) is linear, the algorithm describes in Subsection \ref{eff_impl} indicates that it admits a unique solution. The proof for mass conservation and energy dissipation is essentially the same as that for the semi-discrete case. For the readers' convenience, we still provide details below.

Summing \eqref{e_second-order1} on $i,j$ for $1\leq i\leq N_x,~1\leq j\leq N_y$ leads to 
\begin{equation}\label{second-order mass conservation1}
\aligned
(\Psi^{n+1}-\Psi^{n},1)_m+\beta\Delta t(\Psi^{n+1/2},1)_m=M\Delta t(\Delta_hW^{n+1/2},1)_m.
\endaligned
\end{equation}
Similarly, by summing \eqref{e_second-order2}, we can obtain
\begin{equation}\label{second-order mass conservation2}
\aligned
(Z^{n+1}-Z^n,1)_m=\Delta t(\Psi^{n+1/2},1)_m.
\endaligned
\end{equation}
Taking notice of Lemma \ref{le_discrete-integration-by-part} and the boundary condition \eqref{e_first-order1}, the term on the right hand side of \eqref{second-order mass conservation1} can be transformed into
\begin{equation}\label{second-order mass conservation3}
\aligned
M\Delta t(\Delta_hW^{n+1/2},1)_m=-M\Delta t\left((d_xW^{n+1/2},d_x1)_x+(d_yW^{n+1/2},d_y1)_y\right)=0.
\endaligned
\end{equation}
Then \eqref{second-order mass conservation1} can be estimated as follows:
\begin{equation}\label{second-order mass conservation4}
\aligned
(1+\frac \beta 2\Delta t)(\Psi^{n+1},1)_m=(1-\frac \beta 2\Delta t)(\Psi^{n},1)_m.
\endaligned
\end{equation}
Combining \eqref{second-order mass conservation4} with the condition on the initial condition $(\Psi^0,1)=0$ leads to $(\Psi^{n+1},1)_m=0$ for all $n\leq 0$. Recalling \eqref{second-order mass conservation2}, we have 
$(Z^{n+1},1)_m=(Z^{n},1)_m$.

Next, we prove \eqref{second-order energy stability1}.
Multiplying \eqref{e_second-order2} by $W_{i,j}^{n+1/2}h_xh_y$, and making summation on $i,j$ for $1\leq i\leq N_x,\ 1\leq j\leq N_y$, we have
\begin{equation}\label{second-order energy stability2}
\aligned
\Delta t(\Psi^{n+1/2},W^{n+1/2})_m=(Z^{n+1}-Z^n,W^{n+1/2})_m.
\endaligned
\end{equation}
Multiplying \eqref{e_second-order3} by $(Z^{n+1}_{i,j}-Z^n_{i,j})h_xh_y$, and making summation on $i,j$ for $1\leq i\leq N_x,\ 1\leq j\leq N_y$, we have
\begin{equation}\label{second-order energy stability3}
\aligned
&(W^{n+1/2},Z^{n+1}-Z^n)_m=(\Delta_h^2Z^{n+1/2},Z^{n+1}-Z^{n})_m+2(\Delta_h\tilde{Z}^{n+1/2},Z^{n+1}-Z^n)_m\\
&\ \ \ \ \ +\alpha(Z^{n+1/2},Z^{n+1}-Z^{n})_m+(\frac{R^{n+1/2}}{\sqrt{E_1^h(\tilde{Z}^{n+1/2})}}F^{\prime}(\tilde{Z}^{n+1/2}),Z^{n+1}-Z^n)_m.
\endaligned
\end{equation}
The first three terms on the right-hand side of \eqref{second-order energy stability3} can be dealt with the help of Lemma \ref{le_discrete-integration-by-part} and the boundary condition \eqref{e_first-order1}:
\begin{equation}\label{second-order energy stability4}
\aligned
(\Delta_h^2Z^{n+1/2},Z^{n+1}-Z^n)_m=\frac1 2(\|\Delta_hZ^{n+1}\|_m^2-\|\Delta_hZ^{n}\|_m^2).
\endaligned
\end{equation}

\begin{equation}\label{second-order energy stability5}
\aligned
&2(\Delta_h\tilde{Z}^{n+1/2},Z^{n+1}-Z^n)_m\\
=&-\|\nabla_hZ^{n+1}\|^2+\|\nabla_hZ^{n}\|^2+\frac1 2(\|\nabla_hZ^{n+1}-\nabla_hZ^{n}\|^2-\|\nabla_hZ^{n}-\nabla_hZ^{n-1}\|^2)\\
&+\frac1 2\|\nabla_hZ^{n+1}-2\nabla_hZ^{n}+\nabla_hZ^{n-1}\|^2.
\endaligned
\end{equation}

\begin{equation}\label{second-order energy stability5_add}
\aligned
\alpha(Z^{n+1/2},Z^{n+1}-Z^n)_m=\frac\alpha 2(\|Z^{n+1}\|_m^2-\|Z^{n}\|_m^2).
\endaligned
\end{equation}
Multiplying \eqref{e_second-order4} by $(R^{n+1}+R^n)$ leads to
\begin{equation}\label{second-order energy stability6}
\aligned
(R^{n+1})^2-(R^n)^2=(\frac{R^{n+1/2}}{\sqrt{E_1^h(\tilde{Z}^{n+1/2} )}}F^{\prime}( \tilde{Z}^{n+1/2} ),Z^{n+1}-Z^n)_m.
\endaligned
\end{equation}
Combining \eqref{second-order energy stability3} with \eqref{second-order energy stability2} and  \eqref{second-order energy stability4}-\eqref{second-order energy stability6}, we have 
\begin{equation}\label{second-order energy stability7}
\aligned
&\frac1 2(\|\Delta_hZ^{n+1}\|_m^2-\|\Delta_hZ^{n}\|_m^2)
-\|\nabla_hZ^{n+1}\|^2+\|\nabla_hZ^{n}\|^2+(R^{n+1})^2-(R^n)^2\\
+&\frac1 2(\|\nabla_hZ^{n+1}-\nabla_hZ^{n}\|^2-\|\nabla_hZ^{n}-\nabla_hZ^{n-1}\|^2)\\
+&\frac1 2\|\nabla_hZ^{n+1}-2\nabla_hZ^{n}+\nabla_hZ^{n-1}\|^2+\frac\alpha 2(\|Z^{n+1}\|_m^2-\|Z^{n}\|_m^2)\\
=&\Delta t(\Psi^{n+1/2},W^{n+1/2})_m.
\endaligned
\end{equation}
Since recalling \eqref{e_second-order1}, we can derive 
\begin{equation}\label{second-order energy stability8}
\aligned
\frac{1}{2M}(\|\Psi^{n+1}\|_{H^{-1}}^2-&\|\Psi^n\|_{H^{-1}}^2)=\frac 1 M(\Psi^{n+1}-\Psi^n,\Psi^{n+1/2})_{-1}\\
=&-\frac 1 M(\Psi^{n+1}-\Psi^n,\Delta_h^{-1}\Psi^{n+1/2})_m\\
=&-\frac{\beta}{M}\Delta t\|\Psi^{n+1/2}\|_{H^{-1}}^2-\Delta t(W^{n+1/2},\Psi^{n+1/2})_m.
\endaligned
\end{equation}
Finally, combining \eqref{second-order energy stability7} with \eqref{second-order energy stability8} gives the desired result. 
\end{proof}
 \section{Error analysis}  
 In this section, we carry out a rigorous error analysis for the second-order scheme \eqref{e_second-order1}-\eqref{e_second-order4}. 
  
 Set
\begin{equation*}
\aligned
& e_{\phi}^n=Z^n-\phi^n,\  e_{\psi}^n=\Psi^n-\psi^n,\\
& e_{\mu}^{n}=W^{n}-\mu^n,\ e_{r}^n=R^n-r^n.
\endaligned
\end{equation*} 

We start by proving the following lemma  which will be used to control the backward diffusion term in the error analysis.
\begin{lemma}\label{le: control of backward diffusion term}
 Suppose that $\phi$ and  $\Delta_h\phi$ are satisfy the homogeneous Neumann boundary conditions, then we have
\begin{equation}\label{second-order error estimate1}
\aligned
\|\Delta_h\phi\|_m^2\leq \frac{1}{3\epsilon^2}\|\phi\|_m^2+\frac{2\epsilon}{3}\|\nabla_h(\Delta_h\phi)\|^2.
\endaligned
\end{equation} 
\end{lemma}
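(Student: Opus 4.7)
The plan is to establish a discrete Gagliardo--Nirenberg type interpolation of the form $\|\Delta_h\phi\|_m^2 \lesssim \|\phi\|_m^{2/3}\|\nabla_h(\Delta_h\phi)\|^{4/3}$ using two applications of the discrete summation-by-parts lemma (Lemma~\ref{le_discrete-integration-by-part}), and then to close the argument with a weighted Young's inequality that produces exactly the constants $\frac{1}{3\epsilon^2}$ and $\frac{2\epsilon}{3}$.

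First I would expand $\Delta_h\phi = D_x(d_x\phi)+D_y(d_y\phi)$ in one of the two factors of $\|\Delta_h\phi\|_m^2$ and apply Lemma~\ref{le_discrete-integration-by-part}; the boundary vanishing $d_x\phi|_{1/2,j}=d_x\phi|_{N_x+1/2,j}=0$ (and the $y$-analog) follows from the homogeneous Neumann condition on $\phi$. This yields
\begin{equation*}
\|\Delta_h\phi\|_m^2 = -(d_x\phi,d_x(\Delta_h\phi))_x - (d_y\phi,d_y(\Delta_h\phi))_y \le \|\nabla_h\phi\|\,\|\nabla_h(\Delta_h\phi)\|
\end{equation*}
by Cauchy--Schwarz. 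A second application of the same lemma to $\|\nabla_h\phi\|^2 = (d_x\phi,d_x\phi)_x+(d_y\phi,d_y\phi)_y$, again with the Neumann condition on $\phi$, produces $\|\nabla_h\phi\|^2 = -(\phi,\Delta_h\phi)_m \le \|\phi\|_m\|\Delta_h\phi\|_m$, so $\|\nabla_h\phi\| \le \|\phi\|_m^{1/2}\|\Delta_h\phi\|_m^{1/2}$. Substituting this back and absorbing one factor of $\|\Delta_h\phi\|_m^{1/2}$ from both sides gives the interpolation $\|\Delta_h\phi\|_m^2 \le \|\phi\|_m^{2/3}\|\nabla_h(\Delta_h\phi)\|^{4/3}$.

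The final step is Young's inequality with conjugate exponents $3$ and $3/2$: writing
\begin{equation*}
\|\phi\|_m^{2/3}\,\|\nabla_h(\Delta_h\phi)\|^{4/3} = \bigl(\epsilon^{-2/3}\|\phi\|_m^{2/3}\bigr)\bigl(\epsilon^{2/3}\|\nabla_h(\Delta_h\phi)\|^{4/3}\bigr)
\end{equation*}
and applying $XY \le \frac{X^3}{3} + \frac{2Y^{3/2}}{3}$ recovers exactly $\frac{1}{3\epsilon^2}\|\phi\|_m^2 + \frac{2\epsilon}{3}\|\nabla_h(\Delta_h\phi)\|^2$. I do not expect any genuine obstacle here; the only item requiring care is book-keeping of the boundary-value hypotheses of Lemma~\ref{le_discrete-integration-by-part} at each summation-by-parts (only Neumann on $\phi$ is actually used in the route above; the Neumann hypothesis on $\Delta_h\phi$ stated in the lemma is presumably retained for uniformity with how the bound is invoked in the error analysis), and the precise tuning of the $\epsilon$ weights in Young's inequality so that the two numerical constants come out exactly as stated.
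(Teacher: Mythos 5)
Your argument is correct and is essentially the argument the paper relies on: the paper itself gives no details, simply citing Lemma~3.10 of Wise, Wang and Lowengrub (\texttt{wise2009energy}) and noting the Neumann case is the same as the periodic one, and that cited proof proceeds exactly as you do --- two discrete summations by parts giving $\|\Delta_h\phi\|_m^2\le\|\nabla_h\phi\|\,\|\nabla_h(\Delta_h\phi)\|$ and $\|\nabla_h\phi\|^2\le\|\phi\|_m\|\Delta_h\phi\|_m$, followed by Young's inequality with exponents $3$ and $3/2$. Your bookkeeping of the boundary hypotheses (only the Neumann condition on $\phi$ is actually needed for the vanishing of $d_x\phi$, $d_y\phi$ at the half-integer boundary indices in Lemma~\ref{le_discrete-integration-by-part}) and the $\epsilon$-weights in Young's inequality are both accurate.
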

\begin{proof}
The proof for the homogeneous Neumann boundary condition is essentially the same as for the periodic boundary condition. One can refer to \cite[Lemma 3.10]{wise2009energy} for more detail.
\end{proof}

\medskip

\begin{theorem}\label{thm: second-order error estimate} 
We assume that $\phi\in W^{4,\infty}(J;L^{\infty}(\Omega)) \cap L^{\infty}(J;W^{6,\infty}(\Omega))\cap W^{2,\infty}(J;W^{4,\infty}(\Omega))$. Let $\Delta t\leq C(h_x+h_y)$, then for the discrete scheme \eqref{e_second-order1}-\eqref{e_second-order4}, there exists a positive constant $C$ independent of $h_x$, $h_y$ and $\Delta t$ such that
\begin{equation}\label{second-order error estimate2}
\aligned
&\|Z^{k+1}-\phi^{k+1}\|_m+\|\nabla_h(\Delta_hZ^{k+1})-\nabla_h(\Delta_h\phi^{k+1})\|\\
&+\|\Delta_hZ^{k+1}-\Delta_h\phi^{k+1}\|_m+
|R^{k+1}-r^{k+1}|\\
\leq&C(\|\phi\|_{W^{4,\infty}(J;L^{\infty}(\Omega))}+\|\phi\|_{W^{2,\infty}(J;W^{4,\infty}(\Omega))})\Delta t^2\\
&+C\|\phi\|_{L^{\infty}(J;W^{8,\infty}(\Omega))}(h_x^2+h_y^2),\quad \forall 0\le k\le N-1.
\endaligned
\end{equation}
\end{theorem}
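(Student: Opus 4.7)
The plan is to mirror the discrete energy argument of Theorem~\ref{thm: second-order mass conservation}, but now applied to the error variables $e_\phi^n,e_\psi^n,e_\mu^n,e_r^n$. First I would derive error equations by substituting the exact solution evaluated at the discrete time levels into \eqref{e_second-order1}--\eqref{e_second-order4} and subtracting. Thanks to the Crank--Nicolson structure, the centered extrapolation $\tilde f^{n+1/2}=(3f^n-f^{n-1})/2$, and standard block-centered finite difference consistency for the discrete Laplacian (cell-centered, second-order), the four resulting truncation terms $\tau_j^{n+1/2}$ ($j=1,\dots,4$) are all of size $O(\Delta t^2+h_x^2+h_y^2)$ in the appropriate discrete norm, with the usual regularity requirements $\phi\in W^{4,\infty}(J;L^\infty)\cap W^{2,\infty}(J;W^{4,\infty})\cap L^\infty(J;W^{6,\infty})$ from the hypothesis.

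Next I would test the error equations exactly as in the stability proof: take the discrete $H^{-1}$ inner product of the $e_\psi$-equation with $e_\psi^{n+1/2}$ (producing a $\|e_\psi^{n+1/2}\|_{-1}^2$ dissipation plus a $-\Delta t(e_\mu^{n+1/2},e_\psi^{n+1/2})_m$ term), multiply the discrete $e_\mu$-equation by $e_\phi^{n+1}-e_\phi^n$ and use Lemma~\ref{le_discrete-integration-by-part} to convert it, via identities \eqref{second-order energy stability4}--\eqref{second-order energy stability5_add}, into differences of $\|\Delta_h e_\phi\|_m^2$, $\|\nabla_h e_\phi\|^2$, and $\|e_\phi\|_m^2$, and finally multiply the $e_r$-equation by $e_r^{n+1}+e_r^n$. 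Summing these in the same pattern as \eqref{semi-discrete energy stability7}--\eqref{semi-discrete energy stability8} cancels the $e_\mu$-pairings and yields a telescoping identity for a discrete error functional
\begin{equation*}
\mathcal F^{n+1}:=\tfrac12\|\Delta_h e_\phi^{n+1}\|_m^2-\|\nabla_h e_\phi^{n+1}\|^2+\tfrac\alpha2\|e_\phi^{n+1}\|_m^2+|e_r^{n+1}|^2+\tfrac1{2M}\|e_\psi^{n+1}\|_{-1}^2+\tfrac12\|\nabla_h e_\phi^{n+1}-\nabla_h e_\phi^n\|^2,
\end{equation*}
with right-hand side consisting of the SAV nonlinear discrepancies and the truncation pairings.

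The main obstacle is twofold: the sign-indefinite term $-\|\nabla_h e_\phi^{n+1}\|^2$ coming from the backward diffusion $2\Delta_h\tilde Z^{n+1/2}$, and the cubic nonlinearity hidden in $F'(\tilde Z^{n+1/2})/\sqrt{E_1^h(\tilde Z^{n+1/2})}$. For the former, $\mathcal F^{n+1}$ is not obviously coercive; here I would apply Lemma~\ref{le: control of backward diffusion term} to $e_\phi^{n+1}$, so that $\|\nabla_h e_\phi^{n+1}\|^2\le\|e_\phi^{n+1}\|_m\|\Delta_h e_\phi^{n+1}\|_m$ is absorbed into a fraction of $\tfrac12\|\Delta_h e_\phi^{n+1}\|_m^2$ plus a $\|e_\phi^{n+1}\|_m^2$ term that is harmless in Gronwall. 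This is precisely why the theorem controls $\|\nabla_h\Delta_h e_\phi^{k+1}\|$: that norm enters naturally when one combines the $H^{-1}$ pairing with the identity $\Delta t\,e_\psi^{n+1/2}=e_\phi^{n+1}-e_\phi^n+\tau_2^{n+1/2}$, after which a discrete $\|\nabla_h\Delta_h e_\phi^{n+1/2}\|^2$ contribution appears and supplies the stronger coercivity needed by Lemma~\ref{le: control of backward diffusion term}.

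For the nonlinearity I would proceed by induction on $n$, assuming the a priori bound $\|Z^n\|_\infty\le K$ for some $K>\|\phi\|_{L^\infty}+1$ at step $n$, so that $F'$ and $1/\sqrt{E_1^h(\cdot)}$ are Lipschitz on the relevant range. Standard SAV manipulations (see the error treatment of SAV schemes in \cite{shen2018scalar}) then bound the nonlinear discrepancy by $C\Delta t(\|e_\phi^n\|_m^2+\|e_\phi^{n-1}\|_m^2+|e_r^n|^2+|e_r^{n+1}|^2)$. Combining everything, using Young's inequality to hide truncation pairings, gives
\begin{equation*}
\mathcal F^{n+1}-\mathcal F^n\le C\Delta t(\mathcal F^{n+1}+\mathcal F^n)+C\Delta t(\Delta t^2+h_x^2+h_y^2)^2.
\end{equation*}
The discrete Gronwall lemma produces the claimed bound \eqref{second-order error estimate2}, and the induction is closed by lifting the $L^\infty$-control from the just-obtained $H^3$-like error bound via the discrete inverse inequality $\|Z^{n+1}-\phi^{n+1}\|_\infty\le C(h_x+h_y)^{-d/2}\|Z^{n+1}-\phi^{n+1}\|_m$; the hypothesis $\Delta t\le C(h_x+h_y)$ is exactly what makes the leading error $O(\Delta t^2+h^2)$ beat this inverse factor and recover $\|Z^{n+1}\|_\infty\le K$.
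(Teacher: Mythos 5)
Your overall skeleton (error equations with $O(\Delta t^2+h_x^2+h_y^2)$ truncation terms, an energy-type estimate, discrete Gronwall, and an induction that restores the a priori $L^\infty$ bound on $Z^n$ through an inverse inequality under $\Delta t\le Ch$) matches the paper in spirit, but the central energy estimate you propose is not the one that works, and it leaves a genuine gap. By mimicking the stability proof --- testing the $e_\psi$-equation in the discrete $H^{-1}$ inner product and pairing the $e_\mu$-equation with $e_\phi^{n+1}-e_\phi^n$ --- you only generate telescoping of $\|\Delta_h e_\phi\|_m^2$, $\|\nabla_h e_\phi\|^2$, $\|e_\phi\|_m^2$, $|e_r|^2$ and $\|e_\psi\|_{-1}^2$; the norm $\|\nabla_h(\Delta_h e_\phi)\|$ never appears, so the $H^3$-type bound asserted in \eqref{second-order error estimate2} cannot be extracted from your functional $\mathcal F$. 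Your remark that it ``enters naturally'' from the $H^{-1}$ pairing is not substantiated: in that pairing $M(\Delta_h e_\mu^{n+1/2},\Delta_h^{-1}e_\psi^{n+1/2})_m=-M(e_\mu^{n+1/2},e_\psi^{n+1/2})_m$, and the highest-order telescoped quantity is exactly $\|\Delta_h e_\phi\|_m^2$. The paper instead tests the $e_\psi$-error equation with $e_\psi^{n+1/2}$ in the plain $(\cdot,\cdot)_m$ product (an $L^2$ estimate on the ``velocity''): then $M(\Delta_h^3 e_\phi^{n+1/2},e_\psi^{n+1/2})_m$ telescopes $\|\nabla_h(\Delta_h e_\phi)\|^2$, the dissipation produced is $\beta\|e_\psi^{n+1/2}\|_m^2$, and it is this $L^2$ control of $e_\psi$, through $e_\phi^{k}=e_\phi^0+\sum_{l}\Delta t\,e_\psi^{l}$, that later controls $\|e_\phi\|_m$.

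This difference also breaks your Gronwall closure. Your $\mathcal F^{n+1}$ contains $-\|\nabla_h e_\phi^{n+1}\|^2$, and absorbing it by $\|\nabla_h e\|^2\le\|e\|_m\|\Delta_h e\|_m$ costs a $\|e_\phi^{n+1}\|_m^2$ term whose coefficient exceeds $\alpha/2$ (recall $\alpha=1-\epsilon$); with only the $\|e_\psi\|_{-1}$ dissipation available, summing the scheme in time yields control of $\|e_\phi\|_{-1}$, not of $\|e_\phi\|_m$, so the recursion $\mathcal F^{n+1}-\mathcal F^n\le C\Delta t(\mathcal F^{n+1}+\mathcal F^n)+\cdots$ is applied to a functional not known to be nonnegative and the discrete Gronwall step does not close. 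You also misquote Lemma~\ref{le: control of backward diffusion term}: it states $\|\Delta_h\phi\|_m^2\le\frac1{3\epsilon^2}\|\phi\|_m^2+\frac{2\epsilon}{3}\|\nabla_h(\Delta_h\phi)\|^2$, and in the paper it is used after summation to absorb the bad backward-diffusion contribution $2M\|\Delta_h e_\phi^{k+1}\|_m^2$ into a fraction of the $\|\nabla_h(\Delta_h e_\phi^{k+1})\|^2$ term supplied by the $L^2$ test; it is not the interpolation inequality you invoke. Your truncation-error bookkeeping, the SAV nonlinear estimates under an a priori $L^\infty$ bound, and the induction via the inverse inequality are consistent with the paper (though note the paper needs $L^\infty$ bounds on $\nabla_h Z^n$ and $\Delta_h Z^n$ as well, since its nonlinear terms carry $\Delta_h F^{\prime}(\tilde Z^{n+1/2})$), but the core estimate would have to be redone along the paper's lines to reach \eqref{second-order error estimate2}.
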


\begin{proof}
Subtracting equation \eqref{e_model_recastA} from equation \eqref{e_second-order1}, we obtain
\begin{equation}\label{second-order error estimate3}
\aligned
\frac{e_{\psi}^{n+1}-e_{\psi}^{n}}{\Delta t}+\beta e_{\psi}^{n+1/2}=M\Delta_he_{\mu}^{n+1/2}+T_1^{n+1/2},
\endaligned
\end{equation}
where 
\begin{equation}\label{second-order error estimate4}
\aligned
T_{1}^{n+1/2}=\frac{\partial \psi}{\partial t}\big|_{t=n+1/2}-\frac{\psi^{n+1}-\psi^{n}}{\Delta t}
\leq
C\|\psi\|_{W^{3,\infty}(J;L^{\infty}(\Omega))}\Delta t^2.
\endaligned
\end{equation}
Recalling \eqref{e_auxiliary1} and \eqref{e_second-order2}, we have
\begin{equation}\label{second-order error estimate5}
\aligned
e_{\psi}^{n+1/2}=\frac{e_{\phi}^{n+1}-e_{\phi}^n}{\Delta t}+T_2^{n+1/2},
\endaligned
\end{equation}
where
\begin{equation}\label{second-order error estimate6}
\aligned
T_{2}^{n+1/2}=\frac{\phi^{n+1}-\phi^{n}}{\Delta t}-\frac{\partial \phi}{\partial t}\big|_{t=n+1/2}
\leq C\|\phi\|_{W^{3,\infty}(J;L^{\infty}(\Omega))}\Delta t^2.
\endaligned
\end{equation}
Subtracting \eqref{e_model_recastB} from \eqref{e_second-order3} leads to
\begin{equation}\label{second-order error estimate7}
\aligned
e_{\mu}^{n+1/2}=&\Delta_h^2e_{\phi}^{n+1/2}+2\Delta_h\tilde{e}_{\phi}^{n+1/2}+\alpha 
e_{\phi}^{n+1/2}+\frac{R^{n+1/2}}{\sqrt{E_1^h(\tilde{Z}^{n+1/2})}}F^{\prime}(\tilde{Z}^{n+1/2})\\
&-\frac{r^{n+1/2}}{\sqrt{E_1(\phi^{n+1/2})}}F^{\prime}(\phi^{n+1/2})+T_3^{n+1/2},
\endaligned
\end{equation}
where
\begin{equation}\label{second-order error estimate8}
\aligned
T_{3}^{n+1/2}=&\Delta_h^2\phi^{n+1/2}-\Delta^2\phi^{n+1/2}+2\Delta_h\tilde{\phi}^{n+1/2}-2\Delta\phi^{n+1/2}\\
\leq	&C(\|\phi\|_{L^{\infty}(J;W^{6,\infty}(\Omega))}+\|\phi\|_{L^{\infty}(J;W^{4,\infty}(\Omega))})(h_x^2+h_y^2)\\
&+C\|\phi\|_{W^{2,\infty}(J;W^{2,\infty}(\Omega))}\Delta t^2.
\endaligned
\end{equation}
Subtracting \eqref{e_model_recastC} from \eqref{e_second-order4} gives that
\begin{equation}\label{second-order error estimate9}
\aligned
\frac{e_r^{n+1}-e_r^{n}}{\Delta t}=&\frac{1}{2\sqrt{E_1^h(\tilde{Z}^{n+1/2})}}(F^{\prime}(\tilde{Z}^{n+1/2}),
\frac{Z^{n+1}-Z^{n}}{\Delta t})_m\\
&-\frac{1}{2\sqrt{E_1(\phi^{n+1/2})}}\int_{\Omega}F^{\prime}(\phi^{n+1/2})\phi^{n+1/2}_t d\textbf{x}+T_{4}^{n+1/2},
\endaligned
\end{equation}
where 
\begin{equation}\label{second-order error estimate10}
\aligned
T_4^{n+1/2}=r_t^{n+1/2}-\frac{r^{n+1}-r^n}{\Delta t}\leq
C\|r\|_{W^{3,\infty}(J)}\Delta t^2.
\endaligned
\end{equation}
Multiplying \eqref{second-order error estimate3} by $e_{\psi,i,j}^{n+1/2}h_xh_y$, and making summation on $i,j$ for $1\leq i\leq N_x,~1\leq j\leq N_y$, we have 
\begin{equation}\label{second-order error estimate11}
\aligned
(\frac{e_{\psi}^{n+1}-e_{\psi}^{n}}{\Delta t},e_{\psi}^{n+1/2})_m+\beta\|e_{\psi}^{n+1/2}\|_m^2
=&M(\Delta_he_{\mu}^{n+1/2},e_{\psi}^{n+1/2})_m+(T_1^{n+1/2}, e_{\psi}^{n+1/2})_m.
\endaligned
\end{equation} 
The first term on the left-hand side of \eqref{second-order error estimate11} can be transformed into the following
\begin{equation}\label{second-order error estimate12}
\aligned
(\frac{e_{\psi}^{n+1}-e_{\psi}^{n}}{\Delta t},e_{\psi}^{n+1/2})_m=\frac{\|e_{\psi}^{n+1}\|_m^2-\|e_{\psi}^{n}\|_m^2}{2\Delta t}.
\endaligned
\end{equation} 
Taking notice of \eqref{second-order error estimate7}, we can write the first term on the right-hand side of \eqref{second-order error estimate11} as
\begin{equation}\label{second-order error estimate13}
\aligned
&M(\Delta_he_{\mu}^{n+1/2},e_{\psi}^{n+1/2})_m=M(\Delta_h^3e_{\phi}^{n+1/2},e_{\psi}^{n+1/2})_m+2M(\Delta_h^2\tilde{e}_{\phi}^{n+1/2},e_{\psi}^{n+1/2})_m\\
&\ \ \ \ \ +M(\frac{R^{n+1/2}}{\sqrt{E_1^h(\tilde{Z}^{n+1/2})}}\Delta_hF^{\prime}(\tilde{Z}^{n+1/2})-\frac{r^{n+1/2}}{\sqrt{E_1(\phi^{n+1/2})}}\Delta_hF^{\prime}(\phi^{n+1/2}),e_{\psi}^{n+1/2})_m\\
& \ \ \ \ \ +\alpha(\Delta_he_{\phi}^{n+1/2},e_{\psi}^{n+1/2})_m+M(\Delta_hT_3^{n+1/2},e_{\psi}^{n+1/2})_m.
\endaligned
\end{equation} 
Using Lemma \ref{le_discrete-integration-by-part} and the boundary condition \eqref{e_first-order1}, we can write the first and second terms on the right-hand side of \eqref{second-order error estimate13} as
\begin{equation}\label{second-order error estimate14}
\aligned
M(\Delta_h^3e_{\phi}^{n+1/2},e_{\psi}^{n+1/2})_m=&-M(\nabla_h(\Delta_he_{\phi}^{n+1/2}),\nabla_h(\Delta_he_{\psi}^{n+1/2}))\\
=&-M\frac{\|\nabla_h(\Delta_he_{\phi}^{n+1})\|^2-\|\nabla_h(\Delta_he_{\phi}^{n})\|^2
}{2\Delta t}.
\endaligned
\end{equation} 

\begin{equation}\label{second-order error estimate15}
\aligned
&2M(\Delta_h^2\tilde{e}_{\phi}^{n+1/2},e_{\psi}^{n+1/2})_m=M(\Delta_h(3e_{\phi}^n-e_{\phi}^{n-1}), \Delta_he_{\psi}^{n+1/2})_m\\
=&\frac{M}{\Delta t}\left(\|\Delta_he_{\phi}^{n+1}\|_m^2-\|\Delta_he_{\phi}^{n}\|_m^2-\frac{1}{2}(\|\Delta_he_{\phi}^{n+1}-\Delta_he_{\phi}^{n}\|_m^2-\|\Delta_he_{\phi}^{n}-\Delta_he_{\phi}^{n-1}\|_m^2)\right)\\
&-\frac{M}{2\Delta t}\|\Delta_he_{\phi}^{n+1}-2\Delta_he_{\phi}^{n}+\Delta_he_{\phi}^{n-1}\|_m^2.
\endaligned
\end{equation} 
The third term on the right-hand side of \eqref{second-order error estimate13} can be estimated by
\begin{equation}\label{second-order error estimate16}
\aligned
&M(\frac{R^{n+1/2}}{\sqrt{E_1^h(\tilde{Z}^{n+1/2})}}\Delta_hF^{\prime}(\tilde{Z}^{n+1/2})-\frac{r^{n+1/2}}{\sqrt{E_1(\phi^{n+1/2})}}\Delta_hF^{\prime}(\phi^{n+1/2}),e_{\psi}^{n+1/2})_m\\
=&Mr^{n+1/2}(\frac{\Delta_hF^{\prime}(\tilde{Z}^{n+1/2})}{\sqrt{E_1^h(\tilde{Z}^{n+1/2})}}-\frac{\Delta_hF^{\prime}(\tilde{\phi}^{n+1/2})}{\sqrt{E_1^h(\tilde{\phi}^{n+1/2})}}, e_{\psi}^{n+1/2})_m\\
&+Mr^{n+1/2}(\frac{\Delta_hF^{\prime}(\tilde{\phi}^{n+1/2})}{\sqrt{E_1^h(\tilde{\phi}^{n+1/2})}}-\frac{\Delta_hF^{\prime}(\phi^{n+1/2})}{\sqrt{E_1(\phi^{n+1/2})}}, e_{\psi}^{n+1/2})_m\\
&+Me_{r}^{n+1/2}(\frac{\Delta_hF^{\prime}(\tilde{Z}^{n+1/2})}{\sqrt{E_1^h(\tilde{Z}^{n+1/2})}},
e_{\psi}^{n+1/2})_m.
\endaligned
\end{equation} 

Below we shall first assume that there exist three positive constants $C_1$, $C_2$ and $C_3$ such that 
\begin{equation}\label{second-order error estimate17_hypotheses}
\aligned
\|Z^n\|_{{L^{\infty}(\Omega)}}\leq C_1, \ \ \|\nabla_hZ^n\|_{{L^{\infty}(\Omega)}}\leq C_2, \ \|\Delta_hZ^n\|_{{L^{\infty}(\Omega)}}\leq C_3,\quad\forall 0\le n\le N,
\endaligned
\end{equation}
which will be verified late in the proof.

Applying Lemma \ref{le: control of backward diffusion term}, the first term on the right-hand side of \eqref{second-order error estimate16} can be controlled similar to the estimates in \cite{wang2011energy} by 
\begin{equation}\label{second-order error estimate18}
\aligned
&Mr^{n+1/2}(\frac{\Delta_hF^{\prime}(\tilde{Z}^{n+1/2})}{\sqrt{E_1^h(\tilde{Z}^{n+1/2})}}-\frac{\Delta_hF^{\prime}(\tilde{\phi}^{n+1/2})}{\sqrt{E_1^h(\tilde{\phi}^{n+1/2})}}, e_{\psi}^{n+1/2})_m\\
\leq &C(\|e_{\phi}^n\|_m^2+\|\Delta_he_{\phi}^n\|_m^2)+C(\|e_{\phi}^{n-1}\|_m^2+\|\Delta_he_{\phi}^{n-1}\|_m^2)+C\|e_{\psi}^{n+1/2}\|_m^2\\
\leq&C(\|e_{\phi}^n\|_m^2+\|\nabla_h(\Delta_he_{\phi}^n)\|^2)+C(\|e_{\phi}^{n-1}\|_m^2+\|\nabla_h(\Delta_he_{\phi}^{n-1})\|^2)+C\|e_{\psi}^{n+1/2}\|_m^2,
\endaligned
\end{equation}
where $C$ is dependent on $\|r\|_{L^{\infty}(J)},\ \|Z^n\|_{L^{\infty}(\Omega)}, \|\nabla_hZ^n\|_{L^{\infty}(\Omega)}$.

The second term on the right-hand side of \eqref{second-order error estimate16} can be handled by:
\begin{equation}\label{second-order error estimate19}
\aligned
&Mr^{n+1/2}(\frac{\Delta_hF^{\prime}(\tilde{\phi}^{n+1/2})}{\sqrt{E_1^h(\tilde{\phi}^{n+1/2})}}-\frac{\Delta_hF^{\prime}(\phi^{n+1/2})}{\sqrt{E_1(\phi^{n+1/2})}}, e_{\psi}^{n+1/2})_m\\
=&Mr^{n+1/2}(\frac{\Delta_hF^{\prime}(\tilde{\phi}^{n+1/2})}{\sqrt{E_1^h(\tilde{\phi}^{n+1/2})}}-\frac{\Delta_hF^{\prime}(\phi^{n+1/2})}{\sqrt{E_1^h(\tilde{\phi}^{n+1/2})}}, e_{\psi}^{n+1/2})_m\\
&+Mr^{n+1/2}(\frac{\Delta_hF^{\prime}(\phi^{n+1/2})}{\sqrt{E_1^h(\tilde{\phi}^{n+1/2})}}-\frac{\Delta_hF^{\prime}(\phi^{n+1/2})}{\sqrt{E_1(\phi^{n+1/2})}}, e_{\psi}^{n+1/2})_m\\
\leq	&C\|e_{\psi}^{n+1/2}\|_m^2+C\|\phi\|_{W^{2,\infty}(J;W^{2,\infty}(\Omega))}\Delta t^4\\
&+C\|\phi\|_{L^{\infty}(J;W^{3,\infty}(\Omega))}(h_x^4+h_y^4).
\endaligned
\end{equation} 
 The last term on the right-hand side of \eqref{second-order error estimate16} can be directly controlled by Cauchy-Schwarz inequality:
 \begin{equation}\label{second-order error estimate20}
\aligned
Me_{r}^{n+1/2}(\frac{\Delta_hF^{\prime}(\tilde{Z}^{n+1/2})}{\sqrt{E_1^h(\tilde{Z}^{n+1/2})}},
e_{\psi}^{n+1/2})_m\leq C\|e_{\psi}^{n+1/2}\|_m^2+C(e_r^{n+1/2})^2,
\endaligned
\end{equation}
where $C$ is dependent on $\|Z^n\|_{L^{\infty}(\Omega)},\  \|\nabla_hZ^n\|_{L^{\infty}(\Omega)}, \ \|\Delta_hZ^n\|_{L^{\infty}(\Omega)}$. 
Applying estimates \eqref{second-order error estimate18}-\eqref{second-order error estimate20} yields
\begin{equation}\label{second-order error estimate21}
\aligned
&M(\frac{R^{n+1/2}}{\sqrt{E_1^h(\tilde{Z}^{n+1/2})}}\Delta_hF^{\prime}(\tilde{Z}^{n+1/2})-\frac{r^{n+1/2}}{\sqrt{E_1(\phi^{n+1/2})}}\Delta_hF^{\prime}(\phi^{n+1/2}),e_{\psi}^{n+1/2})_m\\
\leq&C(\|e_{\phi}^n\|_m^2+\|\nabla_h(\Delta_he_{\phi}^n)\|^2)+C(\|e_{\phi}^{n-1}\|_m^2+\|\nabla_h(\Delta_he_{\phi}^{n-1})\|^2)\\
&+C\|e_{\psi}^{n+1/2}\|_m^2+C(e_r^{n+1/2})^2+C\|\phi\|^2_{W^{2,\infty}(J;W^{2,\infty}(\Omega))}\Delta t^4\\
&+C\|\phi\|^2_{L^{\infty}(J;W^{3,\infty}(\Omega))}(h_x^4+h_y^4).
\endaligned
\end{equation} 
The last term on the right-hand side of \eqref{second-order error estimate13} can be estimated by 
\begin{equation}\label{second-order error estimate22}
\aligned
&M(\Delta_hT_3^{n+1/2},e_{\psi}^{n+1/2})_m\leq C\|e_{\psi}^{n+1/2}\|_m^2+C\|\phi\|^2_{W^{2,\infty}(J;W^{4,\infty}(\Omega))}\Delta t^4\\
&\ \ \ \ \ +C(\|\phi\|^2_{L^{\infty}(J;W^{8,\infty}(\Omega))}+\|\phi\|^2_{L^{\infty}(J;W^{6,\infty}(\Omega))})(h_x^4+h_y^4).
\endaligned
\end{equation} 
Combining \eqref{second-order error estimate11} with the above equations leads to
\begin{equation}\label{second-order error estimate23}
\aligned
&\frac{\|e_{\psi}^{n+1}\|_m^2-\|e_{\psi}^{n}\|_m^2}{2\Delta t}+\beta\|e_{\psi}^{n+1/2}\|_m^2+M\alpha\frac{\|e_{\phi}^{n+1}\|_m^2-\|e_{\phi}^{n}\|_m^2
}{2\Delta t}\\
&+M\frac{\|\nabla_h(\Delta_he_{\phi}^{n+1})\|^2-\|\nabla_h(\Delta_he_{\phi}^{n})\|^2
}{2\Delta t}\\
&+\frac{M}{2\Delta t}(\|\Delta_he_{\phi}^{n+1}-\Delta_he_{\phi}^{n}\|_m^2-\|\Delta_he_{\phi}^{n}-\Delta_he_{\phi}^{n-1}\|_m^2)\\
\leq &C(\|e_{\phi}^n\|_m^2+\|\nabla_h(\Delta_he_{\phi}^n)\|^2)+C(\|e_{\phi}^{n-1}\|_m^2+\|\nabla_h(\Delta_he_{\phi}^{n-1})\|^2)\\
&+C\|e_{\psi}^{n+1/2}\|_m^2+C(e_r^{n+1/2})^2+\frac{M}{\Delta t}(\|\Delta_he_{\phi}^{n+1}\|_m^2-\|\Delta_he_{\phi}^{n}\|_m^2)\\
&+C(\|\phi\|^2_{L^{\infty}(J;W^{8,\infty}(\Omega))}+\|\phi\|^2_{L^{\infty}(J;W^{6,\infty}(\Omega))})(h_x^4+h_y^4)\\
&+C(\|\phi\|^2_{W^{4,\infty}(J;L^{\infty}(\Omega))}+\|\phi\|^2_{W^{2,\infty}(J;W^{4,\infty}(\Omega))})\Delta t^4.
\endaligned
\end{equation}
Next we give the error estimate of auxiliary function $r$.
Multiplying \eqref{second-order error estimate9} by $e_{r}^{n+1}+e_{r}^{n}$ leads to 
\begin{equation}\label{second-order error estimate24}
\aligned
\frac{(e_r^{n+1})^2-(e_r^{n})^2}{\Delta t}=&
\frac{e_r^{n+1/2}}{\sqrt{E_1^h(\tilde{Z}^{n+1/2})}}(F^{\prime}(\tilde{Z}^{n+1/2}),
d_tZ^{n+1})_m\\
&-\frac{e_r^{n+1/2}}{\sqrt{E_1(\phi^{n+1/2})}}\int_{\Omega}F^{\prime}(\phi^{n+1/2})\phi^{n+1/2}_t d\textbf{x}\\
&+T_{4}^{n+1/2}\cdot (e_{r}^{n+1}+e_{r}^{n}).
\endaligned
\end{equation}
The first two terms on the right-hand side of \eqref{second-order error estimate24} can be transformed into:
\begin{equation}\label{second-order error estimate25}
\aligned
&\frac{e_r^{n+1/2}}{\sqrt{E_1^h(\tilde{Z}^{n+1/2})}}(F^{\prime}(\tilde{Z}^{n+1/2}),
d_tZ^{n+1})_m-\frac{e_r^{n+1/2}}{\sqrt{E_1(\phi^{n+1/2})}}\int_{\Omega}F^{\prime}(\phi^{n+1/2})\phi^{n+1/2}_t d\textbf{x}\\
=&\frac{e_{r}^{n+1/2}}{\sqrt{E_1(\phi^{n+1/2})}}\left((F^{\prime}(\phi^{n+1/2}),d_t\phi^{n+1})_m-\int_{\Omega}F^{\prime}(\phi^{n+1/2})\phi^{n+1/2}_t d\textbf{x}\right)\\
&+e_{r}^{n+1/2}(\frac{F^{\prime}(\tilde{Z}^{n+1/2})}{\sqrt{E_1^h(\tilde{Z}^{n+1/2})}}-
\frac{F^{\prime}(\phi^{n+1/2})}{\sqrt{E_1(\phi^{n+1/2})}},d_t\phi^{n+1})_m\\
&+\frac{e_{r}^{n+1/2}}{\sqrt{E_1^h(\tilde{Z}^{n+1/2})}}(F^{\prime}(\tilde{Z}^{n+1/2}),
d_te_{\phi}^{n+1})_m,
\endaligned
\end{equation}
which can be handled in a similar way as in \cite{li2019energy}. Thus we have
\begin{equation}\label{second-order error estimate26}
\aligned
&\frac{e_r^{n+1/2}}{\sqrt{E_1^h(\tilde{Z}^{n+1/2})}}(F^{\prime}(\tilde{Z}^{n+1/2}),
d_tZ^{n+1})_m-\frac{e_r^{n+1/2}}{\sqrt{E_1(\phi^{n+1/2})}}\int_{\Omega}F^{\prime}(\phi^{n+1/2})\phi^{n+1/2}_t d\textbf{x}\\
\leq &C(e_r^{n+1/2})^2+C\|\phi\|^2_{W^{1,\infty}(J;L^{\infty}(\Omega))}(\|e_{\phi}^{n}\|_m^2+\|e_{\phi}^{n-1}\|_m^2)\\
&+C\|e_{\psi}^{n+1/2}\|_m^2+
C\|\phi\|^2_{W^{1,\infty}(J;W^{2,\infty}(\Omega))}(h_x^4+h_y^4).
\endaligned
\end{equation}
Substituting \eqref{second-order error estimate26} into \eqref{second-order error estimate24} and applying Cauchy-Schwartz inequality, we can obtain
\begin{equation}\label{second-order error estimate27}
\aligned
\frac{(e_r^{n+1})^2-(e_r^{n})^2}{\Delta t}\leq&C(e_r^{n+1/2})^2+C\|\phi\|^2_{W^{1,\infty}(J;L^{\infty}(\Omega))}(\|e_{\phi}^{n}\|_m^2+\|e_{\phi}^{n-1}\|_m^2)\\
&+C\|e_{\psi}^{n+1/2}\|_m^2+
C\|\phi\|^2_{W^{1,\infty}(J;W^{2,\infty}(\Omega))}(h_x^4+h_y^4)\\
&+C\|r\|^2_{W^{3,\infty}(J)}\Delta t^4.
\endaligned
\end{equation}
Combining \eqref{second-order error estimate23} with \eqref{second-order error estimate27} and multiplying by $2\Delta t$, summing over $n,~n=0,1,\ldots,k$, we have
\begin{equation}\label{second-order error estimate28}
\aligned
&\|e_{\psi}^{k+1}\|_m^2+\beta\sum_{n=0}^{k}\Delta t\|e_{\psi}^{n+1/2}\|^2+M\alpha\|e_{\phi}^{k+1}\|^2+M\|\nabla_h(\Delta_he_{\phi}^{k+1})\|^2+2(e_r^{k+1})^2\\
\leq &2M\|\Delta_he_{\phi}^{k+1}\|_m^2+C\sum_{n=0}^{k}\Delta t\|e_{\phi}^n\|_m^2+C\sum_{n=0}^{k}\Delta t\|\nabla_h(\Delta_he_{\phi}^n)\|_m^2+C\sum_{n=0}^{k}\Delta t\|e_{\psi}^{n}\|^2\\
&+C\sum_{n=0}^{k}\Delta t(e_r^n)^2+C(\|\phi\|^2_{L^{\infty}(J;W^{8,\infty}(\Omega))}+\|\phi\|^2_{L^{\infty}(J;W^{6,\infty}(\Omega))})(h_x^4+h_y^4)\\
&+C(\|\phi\|^2_{W^{4,\infty}(J;L^{\infty}(\Omega))}+\|\phi\|^2_{W^{2,\infty}(J;W^{4,\infty}(\Omega))})\Delta t^4.
\endaligned
\end{equation}
To carry out further analysis, we should give the following inequality first.
Applying
$e_{\phi}^k=e_{\phi}^0+\sum\limits_{l=1}^{k}\Delta te_{\psi}^l$
and using Cauchy-Schwarz inequality, we obtain that
\begin{equation}\label{second-order error estimate29}
\aligned
\|e_{\phi}^k\|_m^2\leq &2\|e_{\phi}^0\|_m^2+2\|\sum\limits_{l=1}^{k}\Delta te_{\psi}^l \|_m^2\\
\leq &2\|e_{\phi}^0\|_m^2+2T\sum\limits_{l=1}^{k}\Delta t\|e_{\psi}^l\|_m^2.
\endaligned
\end{equation}
Applying Lemma \ref{le: control of backward diffusion term} and \eqref{second-order error estimate29}, the first term on the right-hand side of \eqref{second-order error estimate28} can be transformed into
\begin{equation}\label{second-order error estimate30}
\aligned
2M\|\Delta_he_{\phi}^{k+1}\|_m^2\leq &C\|e_{\phi}\|_m^2+\frac{M}{2}\|\nabla_h(\Delta_he_{\phi}^{k+1})\|^2\\
\leq&C\sum\limits_{l=1}^{k}\Delta t\|e_{\psi}^l\|_m^2+\frac{M}{2}\|\nabla_h(\Delta_he_{\phi}^{k+1})\|^2.
\endaligned
\end{equation} 
Then using the discrete Gronwall inequality and Lemma \ref{le: control of backward diffusion term},  \eqref{second-order error estimate28} can be estimated as follows:
\begin{equation}\label{second-order error estimate31}
\aligned
&\|e_{\psi}^{k+1}\|_m^2+\|e_{\phi}^{k+1}\|_m^2+\|\Delta_he_{\phi}^{k+1}\|_m^2+
\|\nabla_h(\Delta_he_{\phi}^{k+1})\|^2+(e_r^{k+1})^2\\
\leq &C(\|\phi\|^2_{W^{4,\infty}(J;L^{\infty}(\Omega))}+\|\phi\|^2_{W^{2,\infty}(J;W^{4,\infty}(\Omega))})\Delta t^4\\
&+C\|\phi\|^2_{L^{\infty}(J;W^{8,\infty}(\Omega))}(h_x^4+h_y^4),\quad\forall 0\le k\le N-1.
\endaligned
\end{equation}
It remains to verify the hypothesis \eqref{second-order error estimate17_hypotheses}. Actually this part of the proof follows a similar procedure as  in our previous works \cite{li2018block,li2019energy}. For the readers' convenience, we still provide a detail proof for $\|Z^n\|_{{L^{\infty}(\Omega)}} \leq C_1$ in the following two steps by using the mathematical induction.

\textit{\textbf{Step 1}} (Definition of $C_1$):  
Using the scheme \eqref{e_second-order1}-\eqref{e_second-order4} for $n=0$ and applying the inverse assumption, we can get the approximation $Z^1$ with the following property:
\begin{equation*}
\aligned
& \|Z^1\|_{{L^{\infty}(\Omega)}} \leq \|Z^1-\phi^1\|_{{L^{\infty}(\Omega)}} +\|\phi^1\|_{{L^{\infty}(\Omega)}}  \\
\leq&  \|Z^1-\Pi_h\phi^1\|_{{L^{\infty}(\Omega)}} +\|\Pi_h\phi^1-\phi^1\|_{{L^{\infty}(\Omega)}} +\|\phi^1\|_{{L^{\infty}(\Omega)}} \\
\leq&Ch^{-1}(\|Z^1-\phi^1 \|_m+\|\phi^1-\Pi_h\phi^1\|_m)+\|\Pi_h\phi^1-\phi^1\|_{{L^{\infty}(\Omega)}} +\|\phi^1\|_{{L^{\infty}(\Omega)}} \\
\leq&C(h+h^{-1}\Delta t^2)+\|\phi^1\|_{{L^{\infty}(\Omega)}} \leq C.
\endaligned
\end{equation*}
where $h=\max\{h_x,h_y\}$ and $\Pi_h$ is an bilinear interpolant operator with the following estimate:
\begin{equation}\label{e_boundedness_added2}
\aligned
\|\Pi_h\phi^1-\phi^1\|_{{L^{\infty}(\Omega)}} \leq Ch^2.
\endaligned
\end{equation}
 Thus we can choose the positive constant $C_1$ independent of $h$ and $\Delta t$ such that
\begin{align*}
C_1&\geq \max\{\|Z^{1}\|_{{L^{\infty}(\Omega)}} , 2\|\phi(t^n)\|_{{L^{\infty}(\Omega)}} \}.
\end{align*}

\textit{\textbf{Step 2}} (Induction): By the definition of $C_1$, it is trivial that hypothesis $\|Z^l\|_{{L^{\infty}(\Omega)}}\leq C_1$ holds true for $l=1$. Supposing that $\|Z^{l-1}\|_{{L^{\infty}(\Omega)}} \leq C_1$ holds true for an integer $l=1,\cdots,k+1$, with the aid of the estimate \eqref{second-order error estimate31}, we have that
$$\|Z^{l}-\phi^l\|_m\leq C(\Delta t^2+h^2).$$
Next we prove that $\|Z^{l}\|_{{L^{\infty}(\Omega)}} \leq C_1$ holds true.
Since
\begin{equation}\label{e_boundedness_added3}
\aligned
& \|Z^l\|_{{L^{\infty}(\Omega)}} \leq \|Z^l-\phi^l\|_{{L^{\infty}(\Omega)}} +\|\phi^l\|_{{L^{\infty}(\Omega)}} \\
\leq & \|Z^l-\Pi_h\phi^l\|_{{L^{\infty}(\Omega)}} +\|\Pi_h\phi^l-\phi^l\|_{{L^{\infty}(\Omega)}} +\|\phi^l\|_{{L^{\infty}(\Omega)}} \\
\leq&Ch^{-1}(\|Z^l-\phi^l \|_m+\|\phi^l-\Pi_h\phi^l\|_m)+\|\Pi_h\phi^l-\phi^l\|_{{L^{\infty}(\Omega)}} +\|\phi^l\|_{{L^{\infty}(\Omega)}} \\
\leq&C_4(h+h^{-1}\Delta t^2)+\|\phi^1\|_{{L^{\infty}(\Omega)}} .
\endaligned
\end{equation}
Let $\Delta t\leq C_5h$ and a positive constant $h_1$ be small enough to satisfy
$$C_4(1+C_5^2)h_1\leq\frac{C_1}{2}.$$
Then for $h\in (0,h_1],$ we derive from \eqref{e_boundedness_added3} that
\begin{equation*}
\aligned
\|Z^l\|_{{L^{\infty}(\Omega)}} 
\leq&C_4(h+h^{-1}\Delta t^2)+\|\phi^l\|_{{L^{\infty}(\Omega)}} \\
\leq &C_4(h_1+C_5^2h_1)+\frac{C_1}{2}
\leq C_1.
\endaligned
\end{equation*}
This indicates that $\|Z^n\|_{{L^{\infty}(\Omega)}} \leq C_1$ for all $n$.  The proof for the other two inequalities in \eqref{second-order error estimate17_hypotheses}
is essentially identical with the above procedure so we skip it for the sake of brevity.
\end{proof}

\section{Numerical results and discussions}
In this section, we carry out some numerical experiments with the proposed scheme for the MPFC equation. We first verify the order of convergence. 
 Then we plot evolutions of the original energy as well as the pseudo energy to show that the pseudo energy is indeed dissipative while the original energy is not.

\subsection{Accuracy tests}
We take $\Omega=(0,1)\times(0,1)$, $T=0.5$, $\epsilon=0.25$, $\beta=0.9$,  $M=0.001$ and the initial solution $\phi_0=\cos(2\pi x)\cos(2\pi y)$ with the homogenous Neumann boundary conditions. We use the second-order scheme \eqref{e_second-order1}-\eqref{e_second-order4} and measure the Cauchy error since we do not know the exact solution. Specifically, the error between two different grid spacings $h$ and $\frac{h}{2}$ is calculated by $\|e_{\zeta}\|=\|\zeta_h-\zeta_{h/2}\|$. We take the time step to be $\Delta t=\frac{T}{N}$ with $N=N_x=N_y$, and list the results in Table \ref{table2_example1}.
 For simplicity, we define $\|e_{f}\|_{\infty}=\max\limits_{0\leq l\leq k}\|e_{f}^{l}\|$. We observe a solid second order convergence rate,  which are consistent with the error estimates in Theorem \ref{thm: second-order error estimate}. 

\begin{table}[htbp]
\renewcommand{\arraystretch}{1.1}
\small
\centering
\caption{Errors and convergence rates for the scheme \eqref{e_second-order1}-\eqref{e_second-order4}.}\label{table2_example1}
\begin{tabular}{p{1.5cm}p{1.5cm}p{0.7cm}p{1.8cm}p{0.7cm}p{1.5cm}p{0.7cm}}\hline
$N_x\times N_y$    &$\|e_{\phi}\|_{\infty,m}$    &Rate &$\|\nabla_h(\Delta_he_{\phi})\|_{\infty}$   &Rate  
&$\|e_{r}\|_{\infty}$    &Rate   \\ \hline
$20\times 20$    &1.15E-1                & ---    &79.6E-0         &---  &2.15E-2         &---\\
$40\times 40$    &3.15E-2                &1.87    &22.0E-0         &1.85 &6.62E-3         &1.70\\
$80\times 80$    &8.02E-3                &1.97     &5.62E-0         &1.97 &1.28E-3         &2.38\\
$160\times 160$    &2.11E-3          &1.93    &1.48E-0         &1.93   &2.32E-4         &2.46\\
\hline
\end{tabular}
\end{table}
\subsection{Energy stability test}
In this example, we set  $\Omega=(0,128)\times(0,128)$,
$M=1$, $\epsilon=0.025$, $\beta=0.1$, and consider the MPFC model with the periodic boundary conditions. The initial condition is taken as follows \cite{baskaran2013energy,guo2018high}:
\begin{equation}\label{numerical_initial}
\aligned
\phi_0(x,y)=&0.07-0.02\cos(\frac{2\pi(x-12)}{32})\sin(\frac{2\pi(y-1)}{32})+0.02\cos^2(\frac{\pi(x+10)}{32})\\
&\cos^2(\frac{\pi(y+3)}{32})-0.01\sin^2(\frac{4\pi x}{32})\sin^2(\frac{4\pi(y-6)}{32}).
\endaligned
\end{equation}
We take $\Delta t=0.05$ and evolve the system to the final time $T=10$. The evolutions of discrete original energy and pseudo energy using the second-order scheme are plotted in Figure \ref{fig: pseudo energy}. We  observe that the discrete original energy may increase on some time intervals, while the pseudo energy are non-increasing at all times, which is consistent with our analysis.

\begin{figure}[!htp]
\centering
\includegraphics[scale=0.50]{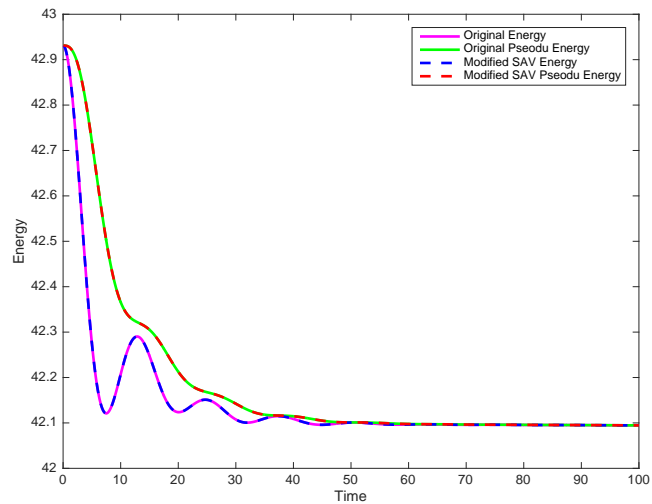}
\caption{The discrete original energy and pseudo energy plotted as functions of time}  \label{fig: pseudo energy}
\end{figure}
\subsection{Summary}
We constructed in this paper two efficient schemes 
for the MPFC model based on the SAV approach and block finite-difference method. Since the original energy of the MPFC equation may increase in time on some time intervals, we introduced a pseudo energy that is dissipative for all times. It is shown that our schemes conserve mass and are unconditionally energy stable with respect to the pseudo energy. We also established rigorously 
 second-order error estimates in both time and space for our  second-order  SAV block-centered finite difference method. Finally some numerical experiments are presented to validate our theoretical results.


\bibliographystyle{siamplain}
\bibliography{SAV_MPFC}

\end{document}